\documentclass[a4paper]{amsart}
\usepackage{amsmath, amsthm, amsfonts, mathtools, tikz-cd, amssymb}
\usepackage{enumitem}
\usepackage{hyperref}
\usepackage{mathabx}

\title[Finiteness properties and quasi-isometry of group pairs]{Finiteness properties and quasi-isometry of\\ group pairs}
\date{March~6, 2026}

\subjclass[2020]{20J05, 20F65}
\keywords{Quasi-isometry of group pairs, finiteness properties of group pairs, Bredon finiteness properties for families of subgroups}

\author[K.~Li]{Kevin Li}
\address{Institut f\"ur Mathematik, Freie Universit\"at Berlin, 14195 Berlin, Germany}
\email{kevin.li@fu-berlin.de}

\author[L.J.~S\'anchez Salda\~na]{Luis Jorge S\'anchez Salda\~na}
\address{Departamento de Matem\'aticas, Facultad de Ciencias, Universidad Nacional Aut\'onoma de M\'exico, 04510 Ciudad de M\'exico, Mexico}
\email{luisjorge@ciencias.unam.mx}

\theoremstyle{definition}
\newtheorem{defn}{Definition}[section]
\newtheorem{ex}[defn]{Example}
\newtheorem{rem}[defn]{Remark}
\newtheorem*{ack}{Acknowledgements}
\theoremstyle{plain}
\newtheorem{lem}[defn]{Lemma}
\newtheorem{prop}[defn]{Proposition}
\newtheorem{thm}[defn]{Theorem}
\newtheorem{cor}[defn]{Corollary}

\numberwithin{equation}{section}
\newcommand{\enum}{\rm{(\roman*)}}
\newcommand{\spann}[1]{{\ensuremath \langle{#1}\rangle}}
\newcommand{\onto}{\twoheadrightarrow}
\renewcommand{\hat}{\widehat}
\newcommand{\sing}{\textup{sing}}

\newcommand{\IN}{\ensuremath{\mathbb{N}}}
\newcommand{\IZ}{\ensuremath{\mathbb{Z}}}
\newcommand{\IR}{\ensuremath{\mathbb{R}}}

\newcommand{\sfF}{\ensuremath{\mathsf{F}}}
\newcommand{\sfFP}{\ensuremath{\mathsf{FP}}}

\newcommand{\calP}{\ensuremath{\mathcal{P}}}
\newcommand{\calQ}{\ensuremath{\mathcal{Q}}}
\newcommand{\calF}{\ensuremath{\mathcal{F}}}
\newcommand{\calFP}{{\ensuremath{\mathcal{F}\spann{\mathcal{P}}}}}
\newcommand{\calFQ}{{\ensuremath{\mathcal{F}\spann{Q}}}}
\newcommand{\OFG}{{\ensuremath{\mathcal{O}_\mathcal{F}G}}}
\newcommand{\OFPG}{{\ensuremath{\mathcal{O}_\calFP G}}}
\newcommand{\EFG}{{\ensuremath{E_\mathcal{F}G}}}
\newcommand{\EFPG}{{\ensuremath{E_\calFP G}}}

\DeclareMathOperator{\hdist}{hdist}
\DeclareMathOperator{\id}{id}
\DeclareMathOperator{\Tor}{Tor}
\DeclareMathOperator{\diam}{diam}
\DeclareMathOperator{\Cone}{Cone}
\DeclareMathOperator{\Map}{Map}

\begin{document}

\begin{abstract}
	We show that the geometric and homological finiteness properties of group pairs are invariant under a suitable notion of quasi-isometry for group pairs.
\end{abstract}

\maketitle

\section{Introduction}

The topic of group cohomology is a rich interplay between algebraic topology, group theory, and homological algebra.
Topological properties of CW-complexes translate, via the classifying space, to properties of groups and these have algebraic counterparts.

Let~$G$ be a discrete group and let~$n\in \IN$.
The group~$G$ is \emph{of type~$\sfF_n$} if there exists a CW-model for~$BG$ with finite $n$-skeleton. Or equivalently, if there exists a $G$-CW-model for~$EG$ with cocompact $n$-skeleton.
The group~$G$ is \emph{of type~$\sfFP_n$} if the trivial $\IZ G$-module~$\IZ$ admits a projective $\IZ G$-resolution that is finitely generated in degrees~$\le n$.
A group is of type~$\sfF_1$ if and only if it is finitely generated, and of type~$\sfF_2$ if and only if it is finitely presented.
For~$n\ge 2$, a group is of type~$\sfF_n$ if and only if it is finitely presented and of type~$\sfFP_n$.
For example, hyperbolic groups are of type~$\sfF_n$ for every~$n\in \IN$.

These topologically and algebraically defined finiteness properties of groups are invariant under the geometric equivalence relation of quasi-isometry and, in fact, inherited by quasi-retracts.
A finitely generated group is considered as a metric space by equipping it with the word metric associated to some finite generating set.

\begin{thm}[Alonso~\cite{Alonso94}]
\label{thm:Alonso_intro}
	Let~$G$ and~$H$ be finitely generated groups and let~$n\in \IN$ with~$n\ge 2$.
	Assume that~$G$ is a quasi-retract of~$H$. 
	Then the following hold:
	\begin{enumerate}[label=\enum]
		\item If~$H$ is of type~$\sfFP_n$, then~$G$ is of type~$\sfFP_n$;
		\item If $H$ is of type~$\sfF_n$, then~$G$ is of type~$\sfF_n$.
	\end{enumerate}
	In particular, if~$G$ and~$H$ are quasi-isometric, then~$G$ is of type~$\sfFP_n$ (resp.\ of type~$\sfF_n$) if and only if~$H$ is of type~$\sfFP_n$ (resp.\ of type~$\sfF_n$).
\end{thm}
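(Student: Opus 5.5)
We prove this by characterizing both finiteness properties through coarse connectivity conditions on the groups as metric spaces, and then checking that these conditions pass to quasi-retracts. Throughout, a quasi-retract means coarse Lipschitz maps $r\colon H\to G$ and $s\colon G\to H$ with $r\circ s$ at bounded distance from $\id_G$.

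\emph{Case (ii), type $\sfF_n$.} We use the criterion that a finitely generated group $G$ is of type $\sfF_n$ if and only if its Rips complexes are coarsely $(n-1)$-connected. Concretely, for every $R$ there should be $R'\ge R$ such that the inclusion $P_R(G)\to P_{R'}(G)$ induces the zero map on $\pi_k$ for all $k\le n-1$.
\begin{enumerate}[label=\enum]
\item For the forward direction, let $X$ be a $G$-CW-model for $EG$ with cocompact $n$-skeleton. Equivariant maps between $P_R(G)$ and $X^{(n)}$, built cell by cell using cocompactness, show that spheres in $P_R(G)$ bound in some $P_{R'}(G)$.
\item For the converse, $G$ acts freely and cocompactly on $P_{R'}(G)$. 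We kill homotopy in dimensions $\le n-1$ by attaching finitely many orbits of cells. The uniform vanishing guarantees that finitely many orbits suffice, and we then extend to a model of $EG$.
\end{enumerate}

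Given the criterion, transfer is direct. A coarse Lipschitz map $f$ induces simplicial maps $P_R\to P_{CR+C}$. Take a sphere $\sigma$ in $P_R(G)$ and push it by $s$ into $P_{R_1}(H)$. Fill it in $P_{R_2}(H)$ using $\sfF_n$ for $H$, then map the filling back by $r$ into $P_{R_3}(G)$. The resulting filling has boundary $r s\sigma$. Because $rs$ is close to the identity, $r s\sigma$ and $\sigma$ are joined by a prism homotopy in $P_{R_4}(G)$, where every vertex $g$ is connected to $rs(g)$. Together these give the required nullhomotopy.

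\emph{Case (i), type $\sfFP_n$.} We run the same argument with homology in place of homotopy. The criterion becomes: $G$ is of type $\sfFP_n$ if and only if $P_R(G)\to P_{R'}(G)$ induces zero on reduced $\tilde H_k(-;\IZ)$ for $k\le n-1$. This is Brown's criterion applied to the filtration of the contractible complex $P_\infty(G)$ by the cocompact free $G$-complexes $P_R(G)$. Transferring cycles through $s$ and $r$, together with the chain homotopy from $rs\simeq\id$, gives (i).

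The main work is the Rips-complex characterization, in particular the implication from coarse connectivity to the finiteness property. For this we will invoke Brown's criterion directly: the filtration $\{P_R(G)\}_R$ is a directed system of free $G$-complexes with finite quotients, and its union $P_\infty(G)$ is contractible. The hypothesis $n\ge 2$ ensures that $G$ is finitely presented in case (ii), so $P_R(G)$ is simply connected for large $R$ and the homotopy version applies. The final statement follows because a quasi-isometry makes each group a quasi-retract of the other.
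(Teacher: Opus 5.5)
\textbf{Comparison with the paper's route.} Your part (i) is the argument the paper attributes to Alonso and carries out for pairs in Section~4. You apply Brown's criterion to the Rips filtration (cf.\ Proposition~\ref{prop:FPn_unicone Rips}). You then transfer essential acyclicity through the simplicial maps induced by the Lipschitz maps, together with the contiguity homotopy coming from $r\circ s\approx \id$ (cf.\ Lemma~\ref{lem:unicone Rips_functorial}).

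For part (ii) you take a genuinely different route, and it is correct. The paper never transfers higher homotopy. It transfers only finite presentability, via coarse simple connectivity of the (coned-off) Cayley graph (Theorem~\ref{thm:fin pres_QI}). It then combines this with part (i) through the equivalence ``type~$\sfF_n$ $\iff$ finitely presented and of type~$\sfFP_n$'' for $n\ge 2$ (Theorem~\ref{thm:Eilenberg-Ganea}).

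You instead use the homotopical form of Brown's criterion, i.e.\ essential $(n-1)$-connectedness of the Rips filtration. You push spheres of every dimension $\le n-1$ through $s_*$, fill them, and map the filling back by $r_*$. This proves (ii) uniformly in all dimensions and independently of (i). The price is that you rely on the homotopical criterion instead of the homological criterion plus the standard reduction to finite presentability.

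The paper's decomposition is what makes its relative generalisation tractable. There it only needs a purely algebraic, homological relative Brown criterion (Theorem~\ref{thm:Brown_criterion_space}) together with a $\pi_1$-statement about unicone loops. It never has to develop a homotopical relative criterion.

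\textbf{Two corrections.}

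First, $G$ does not act freely on $P_R(G)$ when $G$ has torsion. A finite subgroup of diameter $\le R$ spans a simplex that it stabilises setwise, fixing the barycentre. So the $P_R(G)$ are neither free nor $G$-CW-complexes as they stand. To fix this, pass to barycentric subdivisions and observe that every cell stabiliser is finite, hence of type~$\sfF_\infty$. That is exactly what Brown's criterion permits, and it is how the paper handles the same issue in Lemma~\ref{lem:unicone Rips_cocompact}(ii). Your sketch of the converse direction (``extend to a model of $EG$'') should likewise be replaced by this appeal to Brown's criterion with finite stabilisers.

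Second, drop the sentence claiming that $n\ge 2$ makes $G$ finitely presented so that $P_R(G)$ is simply connected for large $R$. Read literally, it is circular: finite presentability of $G$ is part of what (ii) asserts. In your argument it is an output, namely the case $k=1$ of your sphere transfer. The homotopical criterion needs no simple connectivity of the individual complexes $P_R(G)$.
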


\subsection*{Finiteness properties of group pairs}

In many topological situations it is natural and necessary to consider a relative setting of pairs, e.g., for manifolds with boundary.
The cohomology of group pairs was developed systematically, for instance, to treat Poincar\'e duality group pairs~\cite{Bieri-Eckmann78,Alonso91}.
Recently, there is renewed interest in the cohomology of group pairs, most notably in the context of relatively hyperbolic groups and their generalisations~\cite{Manning-Wang,Petrosyan-Sun,MPSV,Patil25}.

Throughout, we will consider only group pairs of the following form.

\begin{defn}
\label{defn:group_pair}
	A \emph{group pair~$(G,\calP)$} consists of a finitely generated group~$G$ and a non-empty finite collection~$\calP$ of subgroups of~$G$.
	(The collection~$\calP$ may contain repetitions.)
	
	We denote by~$G/\calP$ the $G$-set~$\coprod_{P\in \calP} G/P$.
	We will also view~$G/\calP$ as the collection of cosets~$(gP)_{P\in \calP,\, gP\in G/P}$.
	Consider the associated permutation $\IZ G$-module~$\IZ[G/\calP]\coloneqq \bigoplus_{P\in \calP}\bigoplus_{gP\in G/P}\IZ$. Let~$\varepsilon\colon \IZ[G/\calP]\to \IZ$ be the augmentation $\IZ G$-map that is the identity on every summand.
	We denote by~$\Delta_{G/\calP}$ the $\IZ G$-module~$\ker(\varepsilon)$.
\end{defn}

The finiteness properties of group pairs are defined as follows.

\begin{defn}
\label{defn:Fn_pairs}
	Let~$(G,\calP)$ be a group pair and let~$n\in \IN$ with~$n\ge 1$.
		The group pair~$(G,\calP)$ is \emph{of type~$\sfF_n$} if there exists a $G$-CW-pair~$(X,G/\calP)$ such that every cell in~$X\smallsetminus G/\calP$ has trivial stabiliser, $X$ is contractible, and the $n$-skeleton of~$X$ is cocompact.
		
		The group pair~$(G,\calP)$ is \emph{of type~$\sfFP_n$} if the $\IZ G$-module~$\Delta_{G/\calP}$ is of type~$\sfFP_{n-1}$, i.e., there exists a projective $\IZ G$-resolution $\cdots\to P_1\to P_0\to \Delta_{G/\calP}\to 0$ such that for every~$i=0,\ldots,n-1$, the $\IZ G$-module~$P_i$ is finitely generated.
\end{defn}

The finiteness properties of a group~$G$ are equivalent to the finiteness properties of the group pair~$(G,\{1\})$, where~$\{1\}$ is the collection consisting only of the trivial subgroup.
A group pair is of type~$\sfF_2$ if and only if it is relatively finitely presented (Definition~\ref{defn:rel fin pres}).
For~$n\ge 2$, a group pair is of type~$\sfF_n$ if and only if it is relatively finitely presented and of type~$\sfFP_n$ (Theorem~\ref{thm:Eilenberg-Ganea}).
For example, relatively hyperbolic group pairs are of type~$\sfF_n$ for every~$n\ge 1$~\cite{MPP,Patil25}. 

\subsection*{Quasi-isometry of group pairs}
A quasi-isometry of group pairs is a quasi-isometry of groups that ``coarsely preserves" the cosets of subgroups in the collections.
A function~$f\colon X\to Y$ between metric spaces is \emph{$(L,C)$-Lipschitz} if for all~$x,x'\in X$, we have $d_Y(f(x),f(x'))\le L\cdot d_X(x,x')+C$.

\begin{defn}
\label{defn:strong QI}
	Let~$(G,\calP)$ and~$(H,\calQ)$ be group pairs.
	We equip~$G$ and~$H$ with word metrics associated to some finite generating sets.
	Let~$L,C,M\in \IR$ with~$L\ge 1$, $C\ge 0$, and~$M\ge 0$.
	\begin{itemize}
		\item An \emph{$(L,C,M)$-Lipschitz map of pairs $f=(f_1,f_2)\colon (G,\calP)\to (H,\calQ)$} consists of an $(L,C)$-Lipschitz map~$f_1\colon G\to H$ and a function~$f_2\colon G/\calP\to H/\calQ$ such that for every~$A\in G/\calP$, we have
		\[
			\hdist_H\bigl(f_1(A),f_2(A)\bigr) <M,
		\]
		where~$\hdist_H$ denotes the Hausdorff distance between subsets of~$H$.
		\item An \emph{$(L,C,M)$-quasi-retraction of pairs~$(f,r)$} consists of $(L,C,M)$-Lipschitz maps of pairs~$f\colon (G,\calP)\to (H,\calQ)$ and~$r\colon (H,\calQ)\to (G,\calP)$ such that for all~$g\in G$, we have~$d_G\bigl(r_1\circ f_1(g),g\bigr)\le C$, and~$r_2\circ f_2=\id_{G/\calP}$.
		\item An $(L,C,M)$-Lipschitz map~$f\colon (G,\calP)\to (H,\calQ)$ is a \emph{strong $(L,C,M)$-quasi-isometry of pairs} if there exists an $(L,C,M)$-Lipschitz map of pairs $r\colon (H,\calQ)\to (G,\calP)$ such that~$(f,r)$ and~$(r,f)$ are $(L,C,M)$-quasi-retrac\-tions of pairs.
	\end{itemize}
	We say that~$(G,\calP)$ is a \emph{quasi-retract} of~$(H,\calQ)$ (resp.\ \emph{strongly quasi-isometric} to~$(H,\calQ)$) if there exist~$L,C,M\in \IR$ with~$L\ge 1$, $C\ge 0$, and~$M\ge 0$ such that~$(G,\calP)$ is an $(L,C,M)$-quasi-retract of~$(H,\calQ)$ (resp.\ strongly $(L,C,M)$-quasi-isometric to~$(H,\calQ)$).
\end{defn}

We relate Definition~\ref{defn:strong QI} to other notions of quasi-isometry for group pairs in the literature~\cite{HMPSS_survey} in Section~\ref{sec:QI pairs}.
This provides many examples of quasi-retractions and strong quasi-isometries of group pairs (Example~\ref{ex:QI pairs}).
Several classical geometric invariants of groups, such as the number of ends and the Dehn function, have relative versions for group pairs and these were recently shown to be invariant under (strong) quasi-isometry of group pairs~\cite{MPSS_ends,HMPSS}.

\subsection*{Quasi-isometry invariance}
Our main result is that the finiteness properties of group pairs are inherited by quasi-retracts and hence invariant under strong quasi-isometry. 
This provides a relative version of Theorem~\ref{thm:Alonso_intro} for group pairs.

\begin{thm}[Theorem~\ref{thm:FPn_QI} and Corollary~\ref{cor:Fn_QI}]
\label{thm:main_intro}
	Let~$(G,\calP)$ and~$(H,\calQ)$ be group pairs and let~$n\in \IN$ with~$n\ge 2$.
	Assume that~$(G,\calP)$ is a quasi-retract of~$(H,\calQ)$.
	Then the following hold:
	\begin{enumerate}[label=\enum]
		\item If~$(H,\calQ)$ is of type~$\sfFP_n$, then~$(G,\calP)$ is of type~$\sfFP_n$;
		\item If~$(H,\calQ)$ is of type~$\sfF_n$, then~$(G,\calP)$ is of type~$\sfF_n$.
	\end{enumerate}
	In particular, if~$(G,\calP)$ and~$(H,\calQ)$ are strongly quasi-isometric, then~$(G,\calP)$ is of type~$\sfFP_n$ (resp.\ of type~$\sfF_n$) if and only if~$(H,\calQ)$ is of type~$\sfFP_n$ (resp.\ of type~$\sfF_n$).
\end{thm}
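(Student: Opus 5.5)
The plan is to adapt Alonso's argument for Theorem~\ref{thm:Alonso_intro} to the relative setting. The key device is a geometric complex that carries the cosets in~$G/\calP$ and~$H/\calQ$ as extra vertices, such as a coned-off Cayley graph. I first prove (i) via a Brown-type criterion for~$\sfFP_n$ of pairs. Then (ii) follows from (i) together with the relative Eilenberg--Ganea statement (Theorem~\ref{thm:Eilenberg-Ganea}): a pair of type~$\sfF_n$ is relatively finitely presented and of type~$\sfFP_n$, so it suffices to show that relative finite presentability passes to quasi-retracts. The ``in particular'' part is immediate, since a strong quasi-isometry supplies quasi-retractions in both directions.

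\textbf{Step 1: a filtration criterion.} For~$(G,\calP)$ I consider the free simplicial $G$-complex~$K$ with vertex set~$G\sqcup G/\calP$. Its simplices are the finite subsets of diameter at most~$R$, where a coset~$A$ counts as being at distance~$0$ from every~$g\in A$ and distances are measured in the coned-off metric. This is a relative Rips complex, and it is cocompact in each skeleton modulo the vertices~$G/\calP$. I expect to show that~$(G,\calP)$ is of type~$\sfFP_n$ if and only if, for every~$R$, there is~$R'\ge R$ such that the inclusion of relative Rips complexes induces the zero map on reduced homology in degrees~$<n$. This is the analogue of Brown's criterion. I would derive it from the description of~$\Delta_{G/\calP}$ as the augmented relative chain module. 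Equivalently, I could use a cocompact contractible model for the pair. For (ii) I need the analogous statement on~$\pi_1$, which gives relative finite presentability (Definition~\ref{defn:rel fin pres}).

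\textbf{Step 2: inducing maps.} An~$(L,C,M)$-Lipschitz map of pairs~$f$ induces simplicial maps~$K_R(G,\calP)\to K_{R''}(H,\calQ)$: send~$g\mapsto f_1(g)$ and~$A\mapsto f_2(A)$. The bound~$\hdist(f_1(A),f_2(A))<M$ ensures that simplices containing coset vertices go to sets of bounded diameter. The maps are not equivariant, so I work with chain complexes of abelian groups, which is enough because the criterion in Step~1 is non-equivariant. The composite~$r\circ f$ is the identity on the coset vertices, because~$r_2\circ f_2=\id$, and it moves group vertices by at most~$C$. Hence~$r\circ f$ and the inclusion~$K_R\to K_{R'}$ are contiguous for~$R'$ large, so they induce the same map on homology and on homotopy.

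\textbf{Step 3: transport.} Given~$R$, I push forward into~$H$ by~$f$, then use the criterion for~$(H,\calQ)$ to kill classes of degree~$<n$ in some larger~$K_{R_1}(H,\calQ)$. Pulling back by~$r$ then shows that the original classes die in~$K_{R_2}(G,\calP)$. So~$(G,\calP)$ satisfies the criterion.

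I expect the main obstacle to be Step~1. The issue is setting up the relative Rips complex so that the coset vertices have the right stabilisers; this is acceptable in Definition~\ref{defn:Fn_pairs}, since cells outside~$G/\calP$ must be free, and the cosets are precisely the allowed non-free part. The relative complex must also be correctly related to~$\Delta_{G/\calP}$, which requires that~$G/\calP$ appear only as a~$0$-dimensional subcomplex. The~$\pi_1$-version used for (ii) needs the same care. Once the criterion is in place, Steps~2 and~3 are routine coarse geometry.
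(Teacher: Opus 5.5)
\textbf{The gap is in Step~1: the complex you describe is not cocompact and not free away from $G/\calP$.} This is precisely the obstruction the paper has to work around, and your ``cocompact in each skeleton modulo the vertices $G/\calP$'' claim is false.

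If distances between group elements are measured in the coned-off metric, then for every $p\in P\in\calP$ the edge $\{e,p\}$ has diameter $\le 2$. These edges fall into infinitely many $G$-orbits whenever $P$ is infinite. So $K_R^{(1)}$ already has infinitely many orbits of cells outside $G/\calP$, the chain modules are not finitely generated, and no Brown-type criterion applies.

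Using the word metric of $G$ for group vertices does not rescue this, because you still allow simplices spanned by several cone vertices, such as $\{gP,hQ\}$ with $gP\cap hQ\neq\emptyset$. The stabiliser of such a simplex contains $gPg^{-1}\cap hQh^{-1}$. That group need not be of type $\sfFP_{n-\dim\sigma}$; it need not even be finitely generated.

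The same problem breaks your $\pi_1$-version for (ii). The coned-off Cayley graph is not locally finite, and in general there are infinitely many $G$-orbits of loops of bounded length, for instance $e,P,p,Q,e$ with $p\in P\cap Q$. So coarse simple connectivity of $\hat\Gamma$ does not directly give finitely many relators.

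\textbf{The missing idea is to allow at most one cone vertex.} Measure distances with $d_S$ on $G$, and with $d_S(g,A)$ between an element and a coset. This gives the unicone Rips complex $\hat R_\alpha(G,\calP,S)$ of Definition~\ref{defn:unicone Rips}, which has the properties you need:
\begin{itemize}
\item There are finitely many $G$-orbits of simplices: translate a group vertex to $e$, and note that only finitely many cosets meet the $\alpha$-ball.
\item Every simplex other than a lone cone vertex contains a group vertex. Hence all cells of the barycentric subdivision outside $G/\calP$ have finite stabilisers, and the relative Brown criterion (Theorem~\ref{thm:Brown_criterion_space}) applies.
\item $\hat R_\infty$ is still contractible: it is $R_\infty(G,S)$ with a cone on all of $R_\infty(G,S)$ attached for each coset.
\end{itemize}

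Your Steps~2 and~3 then go through for this complex. Here $r_2\circ f_2=\id$ is genuinely needed: it guarantees that $U\cup\hat r\hat f(U)$ still has at most one cone vertex, so the straight-line homotopy stays inside the unicone complex.

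For (ii) the analogous fix is to use unicone loops in $\hat\Gamma(G,\calP,S)$, and it works for three reasons:
\begin{itemize}
\item There are finitely many orbits of unicone loops of bounded length.
\item Any loop can be reduced to unicone loops by removing its cone vertices one at a time.
\item The maps of Lemma~\ref{lem:coned-off Cayley_functorial} preserve the number of cone vertices on each edge, so they send unicone loops to unicone loops.
\end{itemize}
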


The strategy to prove Theorem~\ref{thm:main_intro} follows Alonso's proof of Theorem~\ref{thm:Alonso_intro} but requires subtle modifications.
These are needed because, unlike the Cayley graph, the coned-off Cayley graph (Definition~\ref{defn:coned-off Cayley}) is not locally finite and therefore the Rips complex is not cocompact and there are infinitely many orbits of loops of a fixed length.
We circumvent this problem by using variations that contain at most one cone vertex. 

For part~(i), we first establish a relative version of Brown's criterion (Theorem~\ref{thm:Brown_criterion_space}), which may be of independent interest.
We then apply the criterion to the ``unicone Rips complex" (Definition~\ref{defn:unicone Rips}), which is a cocompact subcomplex of the Rips complex on the coned-off Cayley graph.
For part~(ii), we prove that relative finite presentability is inherited by quasi-retracts (Theorem~\ref{thm:fin pres_QI}) using ``unicone loops" (Definition~\ref{defn:unicone loop}) in the coned-off Cayley graph.
Together with part~(i), this yields part~(ii) because type~$\sfF_n$ is equivalent to relative finite presentability and type~$\sfFP_n$ (Theorem~\ref{thm:Eilenberg-Ganea}).

There is a weaker notion of (not necessarily strong) quasi-isometry for group pairs (Definition~\ref{defn:QI pairs}).
We do not know if Theorem~\ref{thm:main_intro} is true more generally for (not necessarily strongly) quasi-isometric group pairs.
It was asked in~\cite[Question~1.2]{HMPSS} whether relative finite presentability is invariant under quasi-isometry of group pairs.

\subsection*{Bredon finiteness properties for families}
Another well-studied notion of relative finiteness properties is given by the Bredon finiteness properties of a group relative to a family of subgroups (Definition~\ref{defn:Bredon}).
These are defined in terms of classifying spaces for families and projective resolutions over the orbit category.
If the collection~$\calP$ is malnormal (Definition~\ref{defn:reduced}), then the finiteness properties of the group pair~$(G,\calP)$ are equivalent to the Bredon finiteness properties of~$G$ relative to the smallest family~$\calFP$ containing~$\calP$ (Proposition~\ref{prop:Bredon_malnormal}).
As a consequence of Theorem~\ref{thm:main_intro}, the Bredon finiteness properties~$\calFP$-$\sfFP_n$ and~$\calFP$-$\sfF_n$ for a malnormal collection~$\calP$ are inherited by quasi-retracts and hence invariant under strong quasi-isometry of group pairs. 

\begin{thm}[Corollary~\ref{cor:Bredon_QI}]
	Let~$(G,\calP)$ and~$(H,\calQ)$ be group pairs, where~$\calP$ and~$\calQ$ are malnormal collections consisting of distinct subgroups.
	Let~$n\in \IN$ with~$n\ge 2$.
	Assume that~$(G,\calP)$ is a quasi-retract of~$(H,\calQ)$.
	Then the following hold:
	\begin{enumerate}[label=\enum]
		\item If~$H$ is of type~$\calFQ$-$\sfFP_n$, then~$G$ is of type~$\calFP$-$\sfFP_n$;
		\item If~$H$ is of type~$\calFQ$-$\sfF_n$, then~$G$ is of type~$\calFP$-$\sfF_n$.
	\end{enumerate}
	In particular, if~$(G,\calP)$ and~$(H,\calQ)$ are strongly quasi-isometric, then~$(G,\calP)$ is of type~$\calFP$-$\sfFP_n$ (resp.\ of type~$\calFP$-$\sfF_n$) if and only if~$(H,\calQ)$ is of type~$\calFQ$-$\sfFP_n$ (resp.\ of type~$\calFQ$-$\sfF_n$).
\end{thm}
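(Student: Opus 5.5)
This statement follows directly from Theorem~\ref{thm:main_intro} once we know that, for a malnormal collection of distinct subgroups, the finiteness properties of the pair agree with the Bredon finiteness properties relative to the generated family. So the plan is a reduction, not a new geometric argument.

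\textbf{Step 1 (translate to pairs).} Invoke Proposition~\ref{prop:Bredon_malnormal}, which the paper announces in the introduction. For a group pair $(G,\calP)$ with $\calP$ a malnormal collection of distinct subgroups, it should give, for each $n\ge 1$:
\begin{itemize}
\item $G$ is of type $\calFP$-$\sfFP_n$ if and only if $(G,\calP)$ is of type $\sfFP_n$;
\item $G$ is of type $\calFP$-$\sfF_n$ if and only if $(G,\calP)$ is of type $\sfF_n$.
\end{itemize}
Apply the same to $(H,\calQ)$. Malnormality is needed here because the smallest family $\calFP$ then consists of the trivial subgroup together with the subgroups of conjugates of members of $\calP$. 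Moreover, distinct cosets $gP$ have pairwise trivially intersecting stabilisers.

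\textbf{Step 2 (mechanism behind Step 1).} On the geometric side, take a model $X$ for the pair as in Definition~\ref{defn:Fn_pairs}. Coning off each point of $G/\calP$, or rather recognising that the vertices $gP$ already have stabiliser $gPg^{-1}$ and all other cells are free, $X$ is a model for $E_{\calFP}G$. For this one checks that the fixed sets $X^K$ are contractible for $K\in\calFP$. By malnormality, a nontrivial $K\le gPg^{-1}$ fixes exactly the single vertex $gP$, so $X^K$ is a point. Cocompactness of skeleta is preserved in both directions.

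On the algebraic side, the Bredon resolution evaluated at the trivial orbit yields a $\IZ G$-resolution of $\IZ$ whose degree-zero piece involves $\IZ[G/\calP]$. This translates finite generation to the statement that $\Delta_{G/\calP}$ is of type $\sfFP_{n-1}$. I take this as already established by Proposition~\ref{prop:Bredon_malnormal}.

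\textbf{Step 3 (apply the main theorem).} Suppose $H$ is $\calFQ$-$\sfFP_n$ (resp.\ $\calFQ$-$\sfF_n$). By Step 1, $(H,\calQ)$ is of type $\sfFP_n$ (resp.\ $\sfF_n$). Theorem~\ref{thm:main_intro} (i) (resp.\ (ii)) then gives that $(G,\calP)$ is of type $\sfFP_n$ (resp.\ $\sfF_n$). Step 1 applied to $G$ converts this back to $\calFP$-$\sfFP_n$ (resp.\ $\calFP$-$\sfF_n$).

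For the ``in particular'' clause, a strong quasi-isometry provides quasi-retractions in both directions, so we apply the argument symmetrically.

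The only real content is Proposition~\ref{prop:Bredon_malnormal}. There, the delicate point is that the hypotheses ``malnormal'' and ``distinct'' are exactly what make the coset vertices the full fixed sets, and what make $\IZ[G/\calP]$ match the free Bredon module on the orbits $G/P$. Since that result is stated earlier, the corollary itself poses no obstacle.
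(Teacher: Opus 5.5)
Your proposal is correct and matches the paper's argument: translate via Proposition~\ref{prop:Bredon_malnormal} to the finiteness properties of the pairs, apply Theorem~\ref{thm:FPn_QI} and Corollary~\ref{cor:Fn_QI} (together, Theorem~\ref{thm:main_intro}), translate back, and obtain the strong quasi-isometry clause by symmetry. One small inaccuracy in your commentary: the description of $\calFP$ as the subgroups of conjugates of members of $\calP$ holds for any collection, and malnormality together with distinctness is needed only to make the stabilisers of distinct cosets intersect trivially, which is what your fixed-point argument actually uses.
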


\begin{ack}
	Parts of this work were conducted during the authors' stays at the University of Oxford and the Ohio State University.
	We thank these institutions for their hospitality.
	The first author was partially supported by the SFB~1085 \emph{Higher Invariants} (Universit\"at Regensburg, funded by the DFG). 
	The second author is grateful for the financial support of the DGAPA-UNAM grant PAPIIT~IN102426.
\end{ack} 

\section{Quasi-isometry of group pairs}
\label{sec:QI pairs}

We briefly review some results on the geometry of group pairs.
For more background, we refer to~\cite{HMPSS,HMPSS_survey}.

The basic properties of being finitely generated and being finitely presented for groups admit natural extensions to group pairs.

\begin{defn}
\label{defn:rel fin pres}
	Let~$(G,\calP)$ be a group pair and let~$S$ be a subset of~$G$.
	The inclusions~$S\subset G$ and~$P\subset G$ for~$P\in \calP$ induce a group homomorphism
	\begin{equation}
	\label{eqn:rel gen}
		\varphi\colon F\coloneqq F(S)\ast \bigast_{P\in \calP}P\to G,
	\end{equation}
	where~$F(S)$ is the free group on~$S$.
	The set~$S$ is a \emph{relative generating set of~$(G,\calP)$} if the group homomorphism~$\varphi$ is surjective.
	A \emph{relative presentation~$\spann{S,\calP\mid R}$ of~$(G,\calP)$} consists of a relative generating set~$S$ and a subset~$R$ of~$F$ that normally generates~$\ker(\varphi)$ in~$F$.
	A relative presentation~$\spann{S,\calP\mid R}$ is \emph{finite} if the sets~$S$ and~$R$ are finite.
	A group pair~$(G,\calP)$ is \emph{relatively finitely presented} if there exists a finite relative presentation of~$(G,\calP)$.
\end{defn}

The geometry of a group is reflected by its Cayley graph.
For group pairs, this role is played by the coned-off Cayley graph.
Recall that~$G/\calP$ denotes the $G$-set~$\coprod_{P\in \calP}G/P$.

\begin{defn}
\label{defn:coned-off Cayley}
	Let~$(G,\calP)$ be a group pair and let~$S$ be a finite relative generating set of~$(G,\calP)$.
	The \emph{coned-off Cayley graph~$\hat \Gamma(G,\calP,S)$} is the graph with vertex set~$G\sqcup G/\calP$ and two types of edges: $\{g,g'\}$ for all~$g,g'\in G$ with~$g^{-1}g'\in S$, and $\{g,g'P\}$ for all~$g\in G$ and~$g'P\in G/\calP$ with~$g\in g'P$.
\end{defn}

A finitely generated group~$G$ is finitely presented if and only if by attaching finitely many $G$-orbits of $2$-cells to the Cayley graph of~$G$, one can obtain a simply-connected space.
The corresponding result for group pairs is as follows.

\begin{prop}[{\cite[Proposition~4.9]{HMPSS}}]
\label{prop:coned-off Cayley_pi1}
	Let~$(G,\calP)$ be a group pair and let~$S$ be a relative generating set of~$(G,\calP)$.
	Let~$\varphi\colon F\coloneqq F(S)\ast \bigast_{P\in \calP} P\to G$ be the canonical group homomorphism~\eqref{eqn:rel gen}.
	Then the graph~$\hat \Gamma(G,\calP,S)$ is connected and there is an isomorphism of groups 
	\[
		\pi_1(\hat \Gamma(G,\calP,S))\cong \ker(\varphi).
	\]
	In particular, the kernel~$\ker(\varphi)$ is normally finitely generated in~$F$ if and only if there exist finitely many loops in~$\hat \Gamma(G,\calP,S)$ such that the $2$-complex obtained from~$\hat \Gamma(G,\calP,S)$ by attaching a $2$-cell to each of these loops and each of their $G$-translates is simply-connected.
\end{prop}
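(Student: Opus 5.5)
Connectedness is immediate. Every vertex $g'P$ is adjacent to some $g\in g'P$, so it suffices to connect the vertex $1$ to an arbitrary $g\in G$. Since $\varphi$ is surjective, we can write $g=\varphi(w)$ with $w=x_1\cdots x_k$, where each letter $x_i$ lies in $S^{\pm1}$ or in some $P\in\calP$. Reading $w$ letter by letter gives a path in $\hat\Gamma\coloneqq\hat\Gamma(G,\calP,S)$ from $1$ to $g$. A letter $s\in S^{\pm1}$ is the edge from $h$ to $hs$. A letter $p\in P$ is the two-edge path $h\to hP\to hp$ through a cone vertex.

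To compute $\pi_1$, I would realise $\hat\Gamma$ as a quotient of a covering picture. Let $Y$ be the standard presentation complex of $F$ built from the free factors. Take a wedge of circles (one for each $s\in S$) together with classifying spaces $BP$ for $P\in\calP$, wedged at a basepoint. Then $\pi_1(Y)=F$. Let $\tilde Y\to Y$ be the covering corresponding to $\ker(\varphi)$. Its fibre is $F/\ker\varphi\cong G$, it carries a free $G$-action, and $\pi_1(\tilde Y)\cong\ker\varphi$.

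Next I would describe $\tilde Y$ explicitly. It has one vertex for each $g\in G$ and an $s$-edge from $g$ to $gs$. For each coset $gP$ it has a copy of the cover of $BP$ corresponding to $\ker(\varphi|_P)$. Here I use that $\varphi|_P$ is injective, since $P$ is a subgroup of $G$, so this cover is the universal cover $EP$, which is contractible. Its vertex set is $gP$. This is exactly the preimage of $BP$ passing through the vertices $gP$.

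Now collapse each contractible copy of $EP$ to a cone. Concretely, replace each subcomplex $EP_{gP}$ by the cone on its vertex set $gP$, which is a star graph centred at the new vertex $gP$. This step is a homotopy equivalence, provided the copies are pairwise disjoint subcomplexes that meet the rest of $\tilde Y$ only in vertices. Both conditions hold: the copies meet the rest only at vertices, and distinct cosets are disjoint. The last point needs care because $\calP$ may contain repetitions. Repeated entries are separate free factors, so each gives its own copies indexed by separate cosets, and $\hat\Gamma$ likewise uses $G/\calP$ as a disjoint union. To justify the replacement, I would use that a contractible subcomplex $A$ with a CW-pair inclusion satisfies $X\simeq X/A$, and likewise for the cone on the vertex set of $A$. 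Both spaces are therefore homotopy equivalent to the space obtained by collapsing all copies, with the copies handled simultaneously since they are disjoint. The result is $\hat\Gamma$, so $\pi_1(\hat\Gamma)\cong\ker\varphi$.

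The main obstacle is handling the infinitely many copies at once and making sure that $EP$ can be chosen as a CW-complex with vertex set exactly $P$. This is achieved by using a model of $BP$ with a single $0$-cell, such as the standard bar construction, and by being careful with repeated subgroups.

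For the ``in particular'' statement, the isomorphism can be taken to be $G$-equivariant up to conjugation: deck transformations of $\tilde Y$ correspond to $G$. A loop in $\hat\Gamma$ based at $1$ corresponds to an element $r\in\ker\varphi$, and its $G$-translates, moved back to the basepoint along paths, correspond to the $F$-conjugates of $r$. Attaching $2$-cells along finitely many loops and all their translates kills the normal closure of the corresponding finitely many elements. Hence the resulting $2$-complex is simply connected exactly when these elements normally generate $\ker\varphi$ in $F$, which gives both directions.
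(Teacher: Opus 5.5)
\emph{Verdict: your approach is sound, but the replacement step rests on a false disjointness claim and, as written, fails whenever~$\calP$ has two or more entries.}

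Most of your proposal checks out. This includes connectivity, and the isomorphism $\pi_1(\tilde Y)\cong\ker\varphi$ for the cover $\tilde Y$ of $Y=\bigvee_{s\in S}S^1\vee\bigvee_{P\in\calP}BP$. It also includes the identification of the components of the preimage of~$BP$ with contractible copies $EP_{gP}$ with vertex set~$gP$, which uses injectivity of $\varphi|_P$. The translate/conjugate bookkeeping in the ``in particular'' part is fine too. (The paper gives no proof of its own here; it quotes HMPSS.)

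\textbf{The gap.} You claim the copies $EP_{gP}$ are pairwise disjoint subcomplexes. All the $BP$ pass through the wedge point, so the preimages of~$BP$ and~$BP'$ in~$\tilde Y$ meet in the whole vertex set~$G$. Hence $EP_{gP}$ and $EP'_{g'P'}$ share the vertices in $gP\cap g'P'$; for example, both copies through the vertex~$1$ meet there. The same happens for the stars in~$\hat\Gamma$. Disjointness holds in the index set~$G/\calP$, not for the subcomplexes, and this has nothing specifically to do with repetitions.

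Consequently, ``collapsing all copies simultaneously'' collapses each connected union of overlapping copies to a single point. Such unions are in general not contractible, so the claimed homotopy equivalences fail. Concretely, take $G=\IZ/2\times\IZ/2$, $\calP=\{\IZ/2\times 1,\,1\times\IZ/2\}$ and $S=\emptyset$. The four copies of~$S^\infty$ make up all of~$\tilde Y$, whose fundamental group is~$\IZ$. Your collapse yields a point, whereas~$\hat\Gamma$ is an $8$-cycle.

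\textbf{The fix.} The conclusion survives with a device that tolerates overlaps at vertices. The cleanest is to attach each~$BP$ to the wedge point through an arc~$e_P$. This does not change $\pi_1(Y)=F$, but now the preimages of different~$BP$ are disjoint. So the copies $EP_{gP}$ really are pairwise disjoint contractible subcomplexes, and each vertex of~$EP_{gP}$ is the endpoint of exactly one lift of~$e_P$. Collapsing each copy to a point is then a $G$-equivariant homotopy equivalence onto~$\hat\Gamma$, with the lift of~$e_P$ starting at~$h$ becoming the edge~$\{h,hP\}$.

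Alternatively, keep~$Y$ and attach $\Cone(EP_{gP})$ to~$\tilde Y$ for every coset. Both~$\tilde Y$ and~$\hat\Gamma$ (the latter via the cone lines over the vertices) include as homotopy equivalences into the result. Equivariance is what your ``in particular'' argument needs, and both fixes provide it.

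The arc version is really the Bass--Serre picture. Collapsing the copies of~$EP$ in the universal cover of the modified~$Y$ gives the Bass--Serre tree of $F=F(S)\ast\bigast_{P\in\calP}P$. The group~$\ker\varphi$ acts freely on this tree, since it meets every conjugate of every~$P$ trivially, and the quotient is~$\hat\Gamma$. That is a one-line proof.

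\textbf{A convention to state.} Identifying the collapsed cover with~$\hat\Gamma$ requires one edge from~$g$ to~$gs$ for each pair $(g,s)\in G\times S$. If edges are read literally as sets~$\{g,g'\}$, this breaks when $1\in S$, when some $s\in S$ has order~$2$, or when $s,s^{-1}\in S$. For instance, with $G=\IZ/2=\langle t\rangle$, $\calP=\{1\}$ and $S=\{t\}$, the graph~$\hat\Gamma$ is a tree while $\ker\varphi\cong\IZ$. The ``in particular'' clause is unaffected.
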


We will mostly consider the coned-off Cayley graph~$\hat \Gamma(G,\calP,S)$ when~$S$ is a finite (non-relative) generating set of~$G$.
Then the full subgraph of~$\hat \Gamma(G,\calP,S)$ spanned by the vertices in~$G$ is the Cayley graph~$\Gamma(G,S)$.
Note that if~$S$ and~$S'$ are finite generating sets of~$G$, then~$\hat \Gamma(G,\calP,S)$ and~$\hat \Gamma(G,\calP,S')$ are quasi-isometric and we will sometimes just write~$\hat \Gamma(G,\calP)$.

A Lipschitz map of group pairs (Definition~\ref{defn:strong QI}) extends (non-canonically) to a Lipschitz map between coned-off Cayley graphs.

\begin{lem}[{\cite[Lemma~5.9]{HMPSS}}]
\label{lem:coned-off Cayley_functorial}
	Let~$(G,\calP)$ and~$(H,\calQ)$ be group pairs.
	Let~$f=(f_1,f_2)\colon (G,\calP)\to (H,\calQ)$ be an $(L,C,M)$-Lipschitz map of pairs. 
	We denote~$\hat f\coloneqq f_1\sqcup f_2\colon G\sqcup G/\calP\to H\sqcup H/\calQ$ and set $\hat L\coloneqq \max\{L+C,M+1\}$.
	Then there exists a cellular map of $1$-dimensional CW-complexes
	\[
		\hat f_*\colon \hat \Gamma(G,\calP)\to \hat \Gamma(H,\calQ)
	\]
	given on vertices by~$\hat f$ and satisfying the following:
		Every (oriented) edge~$[v,w]$ in~$\hat \Gamma(G,\calP)$ is mapped to an edge path~$\hat f_*[v,w]$ in~$\hat \Gamma(H,\calQ)$ from~$\hat f(v)$ to~$\hat f(w)$ of length~$\le \hat L$.
	Moreover, the number of vertices of~$\hat f_*[v,w]$ in~$H/\calQ$ equals the number of vertices of~$[v,w]$ in~$G/\calP$.
	\begin{proof}
		We fix an orientation of the edges in~$\hat \Gamma(G,\calP)$.
		For an oriented edge~$[v,w]$ in~$\hat \Gamma(G,\calP)$, there are three cases:
		If~$v,w\in G$, we take~$\hat f_*[v,w]$ to be a geodesic in the Cayley graph~$\Gamma(H)$ from~$\hat f(v)$ to~$\hat f(w)$.
			Since~$f_1$ is $(L,C)$-Lipschitz, the length of~$\hat f_*[v,w]$ is bounded by~$L+C$.
		
		If~$v\in G$ and~$w\in G/\calP$, then~$v$ is an element of the coset~$w$.
			Since~$f$ is an~$(L,C,M)$-Lipschitz map of pairs, there exists an element~$u$ of the coset~$f_2(w)$ such that~$d_H(f_1(v),u)\le M$.
			We take~$\hat f_*[v,w]$ to be the concatenation of a geodesic in~$\Gamma(H)$ from~$f_1(v)$ to~$u$ and the edge from~$u$ to the vertex~$f_2(w)$.
			Then the length of~$\hat f_*[v,w]$ is bounded by~$M+1$.
		
		If~$v\in G/\calP$ and~$w\in G$, we take~$\hat f_*[v,w]$ to be defined analogously to the previous case.
	\end{proof}
\end{lem}

We relate the notion of strong quasi-isometry of group pairs (Definition~\ref{defn:strong QI}) to other notions in the literature~\cite{HMPSS_survey}.

\begin{defn}
\label{defn:QI pairs}
	Let~$(G,\calP)$ and~$(H,\calQ)$ be group pairs. We equip~$G$ and~$H$ with word metrics with respect to some finite generating sets.
	Let~$L,C,M\in \IR$ with $L\ge 1$, $C\ge 0$, and~$M\ge 0$.
	For a function~$q\colon G\to H$, we define the relation
	\[
		\dot{q}\coloneqq \bigl\{(A,B)\in G/\calP\times H/\calQ\bigm| \hdist_H(q(A),B)<M\bigr\},
	\]
	where~$\hdist_H$ denotes the Hausdorff distance between subsets of~$H$.
		An \emph{$(L,C,M)$-quasi-isometry of pairs~$q\colon (G,\calP)\to (H,\calQ)$} is an $(L,C)$-quasi-isometry~$q\colon G\to H$ such that the projections~$\dot{q}\to G/\calP$ and~$\dot{q}\to H/\calQ$ are surjective.
\end{defn}

One also considers quasi-isometries of pairs~$q$ for which the projections from~$\dot q$ are additionally bijective~\cite[Section~5]{HMPSS}.
Strong quasi-isometry of pairs (Definition~\ref{defn:strong QI}) fits between the two notions.

\begin{lem}
	Let~$(G,\calP)$ and~$(H,\calQ)$ be group pairs.
	The following hold:
	\begin{enumerate}[label=\enum]
		\item If~$q\colon (G,\calP)\to (H,\calQ)$ is a quasi-isometry of pairs such that the projections~$\dot q\to G/\calP$ and~$\dot q\to H/\calQ$ are bijective, then~$(G,\calP)$ and~$(H,\calQ)$ are strongly quasi-isometric;
		\item If~$(G,\calP)$ and~$(H,\calQ)$ are strongly quasi-isometric, then~$(G,\calP)$ and~$(H,\calQ)$ are quasi-isometric.
	\end{enumerate}
	\begin{proof}
		(i) Let~$q\colon (G,\calP)\to (H,\calQ)$ be an $(L,C,M)$-quasi-isometry such that the projections~$\dot q\to G/\calP$ and~$\dot q\to H/\calQ$ are bijective.
		Let~$f_1\coloneqq q$ and let \mbox{$f_2\colon G/\calP\to H/\calQ$} be the bijection given by the relation~$\dot q$.
		Then~$f=(f_1,f_2)\colon (G,\calP)\to (H,\calQ)$ is an $(L,C,M)$-Lipschitz map of pairs.
		Let~$r_1\colon H\to G$ be an $(L,C)$-quasi-inverse of~$q$ and let~$r_2\colon H/\calQ\to G/\calP$ be the inverse of~$f_2$.
		Then~$r=(r_1,r_2)\colon (H,\calQ)\to (G,\calP)$ is a Lipschitz map of pairs.
		Indeed, for every~$B\in H/\calQ$, we have
		\begin{align*}
			\hdist_G\bigl(r_1(B),r_2(B)\bigr)
			&\le \hdist_G\bigl(r_1(B),r_1\circ q_1(r_2(B))\bigr)+C
			\\
			&\le L\cdot \hdist_H\bigl(B,q_1(r_2(B))\bigr) +2C
			\\
			&= L\cdot \hdist_H\bigl(q_2\circ r_2(B),q_1(r_2(B))\bigr) +2C
			\\
			&< L\cdot M+2C.
		\end{align*}
		By construction, $(f,r)$ and~$(r,f)$ are quasi-retractions of pairs.
		
		(ii) Let~$f=(f_1,f_2)\colon (G,\calP)\to (H,\calQ)$ be a strong $(L,C,M)$-quasi-isometry of pairs.
		Then~$q\coloneqq f_1\colon (G,\calP)\to (H,\calQ)$ is an $(L,C,M)$-quasi-isometry of pairs.
		Indeed, since~$f_2$ is bijective, the projections~$\dot q\to G/\calP$ and~$\dot q\to H/\calQ$ are surjective.
 	\end{proof}
\end{lem}

We list some examples of quasi-isometries of group pairs.

\begin{ex}
\label{ex:QI pairs}
	For the following examples, we assume that the collections consist of distinct subgroups.
	\begin{enumerate}[label=\enum]
		\item Let~$(G,\calP)$ be a group pair and let~$H$ be a finite-index subgroup of~$G$. 
		Then there exists an explicit collection~$\calQ$ of subgroups of~$H$ such that the inclusion~$q\colon H\to G$ is a quasi-isometry of pairs~$q\colon (H,\calQ)\to (G,\calP)$~\cite[Proposition~2.13]{MPSS_ends}.
		\item Let~$(G,\calP)$ and~$(H,\calQ)$ be relatively hyperbolic group pairs, where the collections~$\calP$ and~$\calQ$ consist of finitely generated non-relatively hyperbolic groups.
		Then every quasi-isometry~$q\colon G\to H$ of groups is a quasi-isometry of pairs~$q\colon (G,\calP)\to (H,\calQ)$~\cite[Theorem~4.1]{BDM}.
		\item Let~$(G,\calP)$ be a group pair, where the collection~$\calP$ is quasi-isometrically characteristic \cite[Definition~2.3]{MPSS_ends}, and let~$H$ be a finitely generated group.
		If~$q\colon G\to H$ is a quasi-isometry of groups, then there exists a quasi-isometrically characteristic collection~$\calQ$ of subgroups of~$H$ such that $q\colon (G,\calP)\to (H,\calQ)$ is a quasi-isometry of pairs~\cite[Theorem~1.1]{MPSS_ends}.
		For many concrete examples of quasi-isometrically characteristic collections, we refer to~\cite[Section~4]{HMPSS_survey}.
	\end{enumerate}
\end{ex}

We conclude this section with a sufficient condition for a quasi-isometry of pairs~$q$ to have bijective projections from~$\dot q$.

\begin{defn}
\label{defn:reduced}
	Let~$(G,\calP)$ be a group pair.
		The collection~$\calP$ is \emph{reduced} if for all~$P_1,P_2\in \calP$ and~$g\in G$, the following holds: If~$P_1\cap gP_2g^{-1}$ has finite index in both~$P_1$ and~$gP_2g^{-1}$, then~$P_1=P_2$ and~$g\in P_1$.
		The collection~$\calP$ is \emph{almost malnormal} (resp.\ \emph{malnormal}) if for all~$P_1,P_2\in \calP$ and~$g\in G$, the following holds: If~$P_1\cap gP_2g^{-1}$ is infinite (resp.\ non-trivial), then~$P_1=P_2$ and~$g\in P_1$.
\end{defn}

Clearly, every malnormal collection is almost malnormal and every almost malnormal collection of infinite subgroups is reduced.

\begin{prop}[{\cite[Proposition~5.12]{HMPSS}}]
	Let~$(G,\calP)$ and~$(H,\calQ)$ be group pairs.
	If the collections~$\calP$ and~$\calQ$ are reduced and consist of distinct subgroups, then for every quasi-isometry of pairs $q\colon (G,\calP)\to (H,\calQ)$, the projections~$\dot q\to G/\calP$ and~$\dot q\to H/\calQ$ are bijective.
\end{prop}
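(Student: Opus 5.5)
We argue by contradiction, using only the reducedness of the collections and the quasi-isometry property of~$q$. Let~$q$ be an $(L,C,M)$-quasi-isometry of pairs and let~$\bar q$ be a quasi-inverse. By definition both projections from~$\dot q$ are surjective, so it suffices to show they are injective. By symmetry (the quasi-inverse~$\bar q$ is again a quasi-isometry of pairs, with~$\dot{\bar q}$ containing the transpose of~$\dot q$ after enlarging~$M$), it is enough to treat the projection~$\dot q\to H/\calQ$. So suppose~$(A_1,B),(A_2,B)\in \dot q$ with~$A_1=g_1P_1$, $A_2=g_2P_2$, $P_1,P_2\in\calP$. We aim to show~$A_1=A_2$.

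The first step is to pull the coset~$B$ back. Since~$\hdist_H(q(A_i),B)<M$, the triangle inequality gives~$\hdist_H(q(A_1),q(A_2))<2M$. Applying the quasi-inverse~$\bar q$, which is coarsely Lipschitz and moves every point of~$G$ a bounded distance under~$\bar q\circ q$, we get~$\hdist_G(A_1,A_2)\le K$ for a constant~$K$ depending only on~$L,C,M$. So~$A_1$ and~$A_2$ are cosets at finite Hausdorff distance in~$G$. Translating by~$g_1^{-1}$, the cosets~$P_1$ and~$gP_2$ with~$g=g_1^{-1}g_2$ are at finite Hausdorff distance.

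The second step is the standard fact that if~$\hdist_G(P_1,gP_2)<\infty$, then~$P_1\cap gP_2g^{-1}$ has finite index in both~$P_1$ and~$gP_2g^{-1}$. To see this, note that~$gP_2$ and~$gP_2g^{-1}$ are at Hausdorff distance at most~$|g|$, so~$P_1$ and~$Q\coloneqq gP_2g^{-1}$ lie in each other's $K'$-neighbourhoods. For each~$p\in P_1$, choose~$x_p\in Q$ with~$p^{-1}x_p$ in the finite ball~$B(K')$. If~$p^{-1}x_p=p'^{-1}x_{p'}$, then~$p'p^{-1}=x_{p'}x_p^{-1}\in P_1\cap Q$. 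Hence the cosets~$(P_1\cap Q)p$ are indexed by at most~$|B(K')|$ values, and the index is finite. The same argument works with the roles of~$P_1$ and~$Q$ exchanged. Reducedness then gives~$P_1=P_2$ and~$g\in P_1$. Since the collection consists of distinct subgroups, this identifies the same element of~$\calP$, and~$A_1=g_1P_1=g_2P_2=A_2$.

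The main obstacle is bookkeeping rather than substance: we must get the Hausdorff bound on~$\hdist_G(A_1,A_2)$ from the quasi-isometry constants, and handle the symmetry step for the other projection. For the latter, the cleanest route is to run the same argument directly: if~$(A,B_1),(A,B_2)\in\dot q$, then~$\hdist_H(B_1,B_2)<2M$, and the finite-index argument in~$H$ with reducedness of~$\calQ$ gives~$B_1=B_2$. This direction needs no quasi-inverse at all.
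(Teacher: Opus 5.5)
Your proof is correct. The paper does not prove this statement itself; it only quotes it from \cite[Proposition~5.12]{HMPSS}. Your argument is the standard route to that result: Hausdorff-close cosets $P_1$ and $gP_2$ force $P_1$ and $gP_2g^{-1}$ to be commensurable, and reducedness together with distinctness of the subgroups then identifies the cosets.

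Two minor simplifications are available. First, your opening symmetry reduction is superfluous, since you end up treating both projections directly. Second, for the projection to $H/\calQ$ you do not need a quasi-inverse. The lower quasi-isometry bound $d_G(a,a')\le L\bigl(d_H(q(a),q(a'))+C\bigr)$ already turns $\hdist_H(q(A_1),q(A_2))<2M$ into $\hdist_G(A_1,A_2)\le L(2M+C)$.
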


\section{Finiteness properties of group pairs}

In this section, we establish a relative version of Brown's criterion~\cite[Theorem~2.2]{Brown87} in Theorem~\ref{thm:Brown_criterion_space}.
It characterises the homological finiteness properties of group pairs in terms of resolutions and the essential acyclicity of appropriate filtrations.
Throughout, we consider left modules and left actions.

Recall that an $R$-module~$M$ is \emph{of type~$\sfFP_n$} if there exists a projective $R$-resolution of~$M$ that is finitely generated in degrees~$\le n$.
A group pair~$(G,\calP)$ is of type~$\sfFP_n$ if the $\IZ G$-module~$\Delta_{G/\calP}\coloneqq \ker(\varepsilon\colon \IZ[G/\calP]\to \IZ)$ is of type~$\sfFP_{n-1}$ (Definition~\ref{defn:Fn_pairs}).
To deduce the finiteness properties of~$\Delta_{G/\calP}$, one can use $\IZ G$-resolutions of~$\IZ$ that are projective except in degree~$0$. 

\begin{lem}
\label{lem:0-unfree_resolutions}
	Let~$(G,\calP)$ be a group pair and let~$n\in \IN$ with~$n\ge 1$.
	The following are equivalent:
	\begin{enumerate}[label=\enum]
		\item The group pair~$(G,\calP)$ is of type~$\sfFP_n$;
		\item The $\IZ G$-module~$\Delta_{G/\calP}$ is finitely generated and for every~$k=1,\ldots,n-1$ and every exact sequence of $\IZ G$-modules
		\[
			P_k\xrightarrow{\partial_k} P_{k-1}\to \cdots\to P_1\xrightarrow{\partial_1} P_0\oplus \IZ[G/\calP]\xrightarrow{\eta+ \varepsilon} \IZ\to 0,
		\]
		where the $\IZ G$-module~$P_i$ is finitely generated projective for every~$i=0,\ldots,k$, we have that the $\IZ G$-module~$\ker(\partial_k)$ is finitely generated;
		\item There exists an exact sequence of $\IZ G$-modules
		\[
			L_n\to L_{n-1}\to \cdots\to L_1\to L_0\oplus \IZ[G/\calP]\xrightarrow{\eta+ \varepsilon} \IZ\to 0,
		\]
		where the $\IZ G$-module~$L_i$ is finitely generated free for every~$i=0,\ldots,n$.
	\end{enumerate}
	\begin{proof}
		Suppose that~(i) holds and let a sequence as in~(ii) be given.
		Applying the snake lemma to the diagram
		\[\begin{tikzcd}
			0\ar{r}
			& \IZ[G/\calP]\ar{r}\ar{d}{\varepsilon}
			& P_0\oplus \IZ[G/\calP]\ar{r}\ar{d}{\eta+ \varepsilon}
			& P_0\ar{r}\ar{d}
			& 0
			\\
			0\ar{r}
			& \IZ\ar{r}{=}
			& \IZ\ar{r}
			& 0\ar{r}
			& 0
		\end{tikzcd}\]
		yields an isomorphism of $\IZ G$-modules $\ker(\eta+ \varepsilon)\cong P_0\oplus \Delta_{G/\calP}$.
		Since~$\Delta_{G/\calP}$ is of type~$\sfFP_{n-1}$ by assumption and~$P_0$ is finitely generated projective, the $\IZ G$-module~$\ker(\eta+ \varepsilon)$ is of type~$\sfFP_{n-1}$~\cite[Proposition~1.4]{Bieri81}.
		Considering the exact sequence
		\[
			P_k\xrightarrow{\partial_k} P_{k-1}\to \cdots\to P_1\xrightarrow{\partial_1} \ker(\eta+ \varepsilon)\to 0
		\]
		and applying~\cite[Proposition~VIII.4.3]{Brown82} yields the claim.
		
		Suppose that~(ii) holds.
		Then an exact sequence as in~(iii) can be built inductively.
		
		Suppose that~(iii) holds.
		Considering the exact sequence
		\[
			L_n\to L_{n-1}\to \cdots\to L_1\to \ker(\eta+ \varepsilon)\to 0
		\]
		shows that the $\IZ G$-module~$\ker(\eta+ \varepsilon)$ is of type~$\sfFP_{n-1}$.
		There exists an isomorphism of $\IZ G$-modules~$\ker(\eta+ \varepsilon)\cong L_0\oplus \Delta_{G/\calP}$, as in the first part of the proof.
		Since~$L_0$ is finitely generated free, it follows that~$\Delta_{G/\calP}$ is of type~$\sfFP_{n-1}$~\cite[Proposition~1.4]{Bieri81}.
	\end{proof}
\end{lem}

We will want to use resolutions that might not be finitely generated, but admit filtrations by finitely generated (not necessarily exact) subcomplexes.

\begin{defn}
Let~$R$ be a ring and let~$M$ be an $R$-module.
An \emph{$R$-chain complex augmented over~$M$} consists of an $R$-chain complex~$(C_*,\partial_*)$ concentrated in degrees~$\ge 0$ together with a surjective $R$-map $\eta\colon C_0\onto M$ such that~$\eta\circ \partial_1=0$.
(The augmentation map~$\eta$ will often be left implicit.)
Let~$C_*$ be an $R$-chain complex augmented over~$M$.
An $R$-subcomplex~$C_*'$ of~$C_*$ is \emph{augmented} if the restricted map~$\eta|_{C_0'}\colon C_0'\to M$ is surjective.
The augmentation map~$\eta$ induces a surjection~$H_0(\eta)\colon H_0(C_*)\onto M$.
For~$i\in \IN$, we define the \emph{reduced homology of~$C_*$} as
\[
	\widetilde{H}_i(C_*)\coloneqq \begin{cases}
		H_i(C_*) & \text{if } i\ge 1; \\
		\ker(H_0(\eta)) & \text{if } i=0.
	\end{cases}
\]

Let~$D$ be a directed set (which will often be left implicit).
A \emph{filtration} of an $R$-chain complex~$C_*$ is a directed system of $R$-subcomplexes~$(C^\alpha_*)_{\alpha\in D}$ along inclusions such that $C_*=\varinjlim_{\alpha\in D}C^\alpha_*$.
A filtration of a $G$-CW-complex by $G$-subcomplexes is defined analogously.

A directed system of groups~$(H_\alpha)_{\alpha\in D}$ is \emph{essentially trivial} if for every~$\alpha\in D$ there exists~$\beta\in D$ with~$\beta\ge \alpha$ such that the structure map~$H_\alpha\to H_\beta$ is trivial.
\end{defn}

First, we characterise finiteness properties of modules using free partial resolutions and the essential acyclicity of filtrations by finitely generated subcomplexes.

\begin{lem}
\label{lem:Bieri-Eckmann}
	Let~$R$ be a ring, let~$M$ be an $R$-module, and let~$n\in \IN$.
	Let~$L_*$ be a free $R$-chain complex that is augmented over~$M$.
	Let~$(L_*^\alpha)_\alpha$ be a filtration of~$L_*$ by augmented $R$-subcomplexes.
	Suppose that the following hold:
	\begin{enumerate}[label=\enum]
		\item For every~$i=1,\ldots,n-1$, we have $H_i(L_*)=0$, and the augmentation induces an isomorphism $H_0(L_*)\cong M$;
		\item For every~$\alpha$ and every~$i=0,\ldots,n$, the $R$-module~$L^\alpha_i$ is finitely generated free.
	\end{enumerate}
	Then the $R$-module~$M$ is of type~$\sfFP_n$ if and only if for every~$i=0,\ldots,n-1$, the directed system of groups~$(\widetilde{H}_i(L^\alpha_*))_\alpha$ is essentially trivial.
	\begin{proof}
		The claim is true for~$n=0$.
		Let~$n\ge 1$.
		By~\cite[Proposition~1.2 and Lemma~1.1]{Bieri-Eckmann74}, the $R$-module~$M$ is of type~$\sfFP_n$ if and only if for every index set~$J$ and every~$i=1,\ldots,n-1$, we have
		$\Tor^R_i(\prod_J R,M)=0$ and the natural map $\Tor^R_0(\prod_J R,M)\to \prod_J M$ is an isomorphism.
		For~$i\le n-1$, there are isomorphisms of $R$-modules
		\begin{align*}
			\Tor^R_i(\prod_J R,M)
			&\cong H_i\bigl((\prod_J R)\otimes_R L_*\bigr) 
			\cong H_i\bigl((\prod_J R)\otimes_R \varinjlim_\alpha L^\alpha_*\bigr)
			\\
			&\cong H_i\bigl(\varinjlim_\alpha (\prod_J R)\otimes_R L^\alpha_*\bigr)
			\cong \varinjlim_\alpha H_i\bigl((\prod_J R)\otimes_R L^\alpha_*\bigr)
			\\
			&\cong \varinjlim_\alpha H_i(\prod_J L^\alpha_*)
			\cong \varinjlim_\alpha \prod_J H_i(L^\alpha_*)
		\end{align*}
		where the first isomorphism uses assumption~(i) and the second to last isomorphism uses assumption~(ii).
		Moreover, for~$i=0$, there is a short exact sequence of $R$-modules
		\[
			0\to 
			\varinjlim_\alpha \prod_J \widetilde{H}_0(L^\alpha_*)\to
			\varinjlim_\alpha \prod_J H_0(L^\alpha_*)\to
			\prod_J M\to
			0.
		\]
		By~\cite[Lemma~2.1]{Brown87}, a directed system of groups~$(H_\alpha)_\alpha$ is essentially trivial if and only if for every index set~$J$, we have $\varinjlim_\alpha \prod_J H_\alpha=0$.
		This finishes the proof. 
	\end{proof}
\end{lem}

Next, over the group ring, we relax the partial resolutions from being free to consisting of permutation modules.

\begin{prop}
\label{prop:Brown_criterion_chain}
	Let~$G$ be a group, let~$M$ be a $\IZ G$-module, and let~$n\in \IN$.
	Let~$C_*$ be a $\IZ G$-chain complex that is augmented over~$M$.
	Let~$(C^\alpha_*)_\alpha$ be a filtration of~$C_*$ by augmented $\IZ G$-subcomplexes.
	Suppose that the following hold:
	\begin{enumerate}[label=\enum]
		\item For every~$i=1,\ldots,n-1$, we have $H_i(C_*)=0$, and the augmentation induces an isomorphism $H_0(C_*)\cong M$;
		\item For every~$i\in \IN$, we have $C_i=\bigoplus_{\sigma\in \Sigma_i} \IZ[G/G_\sigma]$ for some set~$\Sigma_i$ and subgroups~$G_\sigma$ of~$G$;
		\item For every~$i\in \IN$ and every~$\sigma\in \Sigma_i$, the group~$G_\sigma$ is of type~$\sfFP_{n-i}$;
		\item For every~$\alpha$ and every~$i=0,\ldots,n$, we have $C^\alpha_i=\bigoplus_{\sigma\in \Omega_i} \IZ[G/G_\sigma]$, where~$\Omega_i$ is a finite subset of~$\Sigma_i$.
	\end{enumerate}
	Then the $\IZ G$-module~$M$ is of type~$\sfFP_n$ if and only if for every~$i=0,\ldots,n-1$, the directed system of groups~$(\widetilde{H}_i(C^\alpha_*))_\alpha$ is essentially trivial.
	\begin{proof}
		For~$i\in \IN$ and~$\sigma\in \Sigma_i$, let~$L^\sigma_*$ be a free $\IZ G_\sigma$-resolution of~$\IZ$ that is finitely generated in degrees~$\le n-i$.
		By a standard construction~\cite[Proposition~1.1]{Brown87}, there exists a free $\IZ G$-chain complex~$L_*$ such that for every~$k\in \IN$, we have
		\[
			L_k=\bigoplus_{i+j=k}\bigoplus_{\sigma\in \Sigma_i} \IZ G\otimes_{\IZ G_\sigma} L^\sigma_j
		\]
		together with a $\IZ G$-chain map $f\colon L_*\to C_*$ that is a quasi-isomorphism (i.e., that induces isomorphisms in homology).
		The filtration~$(C^\alpha_*)_\alpha$ of~$C_*$ gives rise to a filtration~$(L^\alpha_*)_\alpha$ of~$L_*$, where for every~$\alpha$ and every~$k\in \IN$, we have
		\[
			L^\alpha_k=\bigoplus_{i+j=k}\bigoplus_{\sigma\in \Omega_i} \IZ G\otimes_{\IZ G_\sigma} L^\sigma_j.
		\]
		In particular, for every~$k\le n$, the $\IZ G$-module~$L^\alpha_k$ is finitely generated free.
		Moreover, the restriction $f|_{L^\alpha_*}\colon L^\alpha_*\to C^\alpha_*$ is a quasi-isomorphism.
		Hence, for every~$i\in \IN$, the directed systems of groups~$(\widetilde{H}_i(L^\alpha_*))_\alpha$ and~$(\widetilde{H}_i(C^\alpha_*))_\alpha$ are isomorphic and, in particular, one is essentially trivial if and only if the other one is.
		Then the claim follows by applying Lemma~\ref{lem:Bieri-Eckmann} to the free $\IZ G$-chain complex~$L_*$ augmented over~$M$ and the filtration~$(L^\alpha_*)_\alpha$ of~$L_*$.
	\end{proof}
\end{prop}

Finally, we characterise the homological finiteness properties of group pairs using cellular chain complexes of $G$-CW-complexes.
We denote the $n$-skeleton of a $G$-CW-complex~$X$ by~$X^{(n)}$.

\begin{thm}
\label{thm:Brown_criterion_space}
	Let~$(G,\calP)$ be a group pair and let~$n\in \IN$ with~$n\ge 1$.
	Let~$(X,G/\calP)$ be a $G$-CW-pair.
	Let~$(X_\alpha)_\alpha$ be a filtration of~$X$ by $G$-subcomplexes.
	Suppose that the following hold:
	\begin{enumerate}[label=\enum]
		\item $X$ is $(n-1)$-acyclic;
		\item For every cell~$\sigma$ in~$X\smallsetminus G/\calP$, the stabiliser~$G_\sigma$ is of type~$\sfFP_{n-\dim(\sigma)}$;
		\item For every~$\alpha$, we have~$X_\alpha^{(0)}=X^{(0)}$ and~$X_\alpha$ is connected;
		\item For every~$\alpha$, the quotient~$G\backslash X_\alpha^{(n)}$ is compact.
	\end{enumerate}
	Then the group pair~$(G,\calP)$ is of type~$\sfFP_n$ if and only if for every~$i=1,\ldots,n-1$, the directed system of groups~$(H_i(X_\alpha;\IZ))_\alpha$ is essentially trivial.
	\begin{proof}
		Let~$C_*$ and~$C^\alpha_*$ denote the cellular $\IZ G$-chain complex of~$X$ and~$X_\alpha$, respectively.
		We have 
		\[
			C_0=C^\alpha_0=\IZ[G/\calP]\oplus \bigoplus_{v\in \Sigma_0}\IZ[G/G_v],
		\]
		where~$\Sigma_0$ is a set of $G$-orbit representatives of $0$-cells in~$X\smallsetminus G/\calP$.
		Consider the obvious augmentation map $\eta\colon C_0\onto \IZ$.
		Applying the snake lemma to the diagram
		\[\begin{tikzcd}
			0\ar{r}
			& \IZ[G/\calP]\ar{r}\ar{d}{\varepsilon}
			& C_0\ar{r}\ar{d}{\eta}
			& \bigoplus_{v\in \Sigma_0} \IZ[G/G_v]\ar{r}\ar{d}
			& 0
			\\
			0\ar{r}
			& \IZ\ar{r}{=}
			& \IZ\ar{r}
			& 0\ar{r}
			& 0
		\end{tikzcd}\]
		yields a short exact sequence of $\IZ G$-modules
		\begin{equation}
		\label{eqn:snake}
			0\to \Delta_{G/\calP}\to \ker(\eta)\to \bigoplus_{v\in \Sigma_0}\IZ[G/G_v]\to 0.
		\end{equation}
		Since~$\Sigma_0$ is finite and for every~$v\in \Sigma_0$, the group~$G_v$ is of type~$\sfFP_n$, the $\IZ G$-module~$\bigoplus_{v\in \Sigma_0}\IZ[G/G_v]$ is of type~$\sfFP_n$.
		Then it follows from the short exact sequence~\eqref{eqn:snake} that the $\IZ G$-module~$\Delta_{G/\calP}$ is of type~$\sfFP_{n-1}$ if and only if~$\ker(\eta)$ is of type~$\sfFP_{n-1}$~\cite[Proposition~1.4]{Bieri81}.
		
		Consider the truncated $\IZ G$-chain complex~$C_{*\ge 1}$ augmented over~$\ker(\eta)$ via the map~$\partial_1\colon C_1\onto \ker(\eta)$.
		For every~$\alpha$, the $\IZ G$-subcomplex~$C^\alpha_{*\ge 1}$ of~$C_{*\ge 1}$ is augmented by assumption~(iii).
		Note that for every~$i\ge 1$, we have
		\[
			\widetilde{H}_i(C^\alpha_{*\ge 1})\cong H_i(C^\alpha_*). 
		\]
		Then the claim follows by applying Proposition~\ref{prop:Brown_criterion_chain} for~$n-1$ to the $\IZ G$-chain complex~$C_{*\ge 1}$ augmented over~$\ker(\eta)$ and the filtration~$(C^\alpha_{*\ge 1})_\alpha$ of~$C_{*\ge 1}$.
	\end{proof}
\end{thm}

We record the special case of Theorem~\ref{thm:Brown_criterion_space} separately, when~$X$ itself has cocompact $n$-skeleton and we take the constant filtration.
This is a relative version of~\cite[Proposition~1.1]{Brown87} for group pairs.

\begin{cor}
\label{cor:combination}
	Let~$(G,\calP)$ be a group pair and let~$n\in \IN$ with~$n\ge 1$.
	Let~$(X,G/\calP)$ be a $G$-CW-pair.
	Suppose that the following hold:
	\begin{enumerate}[label=\enum]
		\item $X$ is $(n-1)$-acyclic;
		\item For every cell~$\sigma$ in~$X\smallsetminus G/\calP$, the stabiliser~$G_\sigma$ is of type~$\sfFP_{n-\dim(\sigma)}$;
		\item The quotient~$G\backslash X^{(n)}$ is compact.
	\end{enumerate}
	Then the group pair~$(G,\calP)$ is of type~$\sfFP_n$.
\end{cor}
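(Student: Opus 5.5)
The plan is to apply Theorem~\ref{thm:Brown_criterion_space} to the \emph{constant} filtration of~$X$. We take the directed set~$D$ to be a single point~$\{\ast\}$ and set~$X_\ast\coloneqq X$; this is trivially a filtration of~$X$ by $G$-subcomplexes. We then check the four hypotheses of Theorem~\ref{thm:Brown_criterion_space} and the essential triviality condition in its conclusion.

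\emph{Hypotheses.} Hypotheses~(i) and~(ii) of Theorem~\ref{thm:Brown_criterion_space} are exactly hypotheses~(i) and~(ii) of the corollary. Hypothesis~(iv) holds because~$G\backslash X_\ast^{(n)}=G\backslash X^{(n)}$ is compact by hypothesis~(iii). For hypothesis~(iii), the equality~$X_\ast^{(0)}=X^{(0)}$ is tautological. It remains to see that~$X$ is connected. Since~$n\ge 1$, the space~$X$ is at least $0$-acyclic, i.e.\ $\widetilde{H}_0(X;\IZ)=0$ and~$X$ is non-empty. For a CW-complex, $H_0(X;\IZ)$ is free abelian on the set of path components, so~$X$ has exactly one path component and is therefore connected. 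This is the only point needing any thought; the case~$n=1$ is covered, since there $(n-1)$-acyclicity means precisely this.

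\emph{Essential triviality.} With the constant filtration, for each~$i=1,\ldots,n-1$ the directed system~$(H_i(X_\alpha;\IZ))_\alpha$ is the single group~$H_i(X;\IZ)$. This group vanishes by the $(n-1)$-acyclicity of~$X$, so the system is essentially trivial. For~$n=1$ there is no condition to check at all.

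Theorem~\ref{thm:Brown_criterion_space} then shows that~$(G,\calP)$ is of type~$\sfFP_n$. I do not expect any real obstacle: the only non-formal step is deducing connectedness of~$X$ from $0$-acyclicity, which is standard.
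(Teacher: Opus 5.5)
Your proposal is correct and is exactly the paper's argument: the corollary is the special case of Theorem~\ref{thm:Brown_criterion_space} for the constant filtration. Connectedness of~$X$ follows from $0$-acyclicity, and the essential triviality condition reduces to the vanishing of~$H_i(X;\IZ)$ for~$1\le i\le n-1$.
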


\begin{rem}
\label{rem:FPnR}
	The homological finiteness properties can be defined over an arbitrary ring~$R$.
	A group~$G$ is \emph{of type~$\sfFP_n(R)$} if the trivial $RG$-module~$R$ is of type~$\sfFP_n$.
	A group pair~$(G,\calP)$ is \emph{of type~$\sfFP_n(R)$} if the $RG$-module~$\ker(R[G/\calP]\to R)$ is of type~$\sfFP_{n-1}$.
	Theorem~\ref{thm:Brown_criterion_space} holds more generally for type~$\sfFP_n(R)$ by considering homology with coefficients in~$R$.
\end{rem}

We conclude this section by relating the homological finiteness properties (over~$\IZ$) of group pairs to the geometric ones (Definition~\ref{defn:Fn_pairs}).
The proof is analogous to the corresponding result for groups, see~\cite[Theorem~VIII.7.1]{Brown82} and~\cite[Section~4]{Alonso91}.
We say that a $G$-CW-pair~$(X,A)$ is \emph{relatively free} if every cell in~$X\smallsetminus A$ has trivial stabiliser.

\begin{thm}
\label{thm:Eilenberg-Ganea}
	Let~$(G,\calP)$ be a group pair and let~$n\in \IN$ with~$n\ge 2$.
	Then~$(G,\calP)$ is of type~$\sfF_n$ if and only if~$(G,\calP)$ is relatively finitely presented and of type~$\sfFP_n$.
	\begin{proof}
		Suppose that~$(G,\calP)$ is of type~$\sfF_n$.
		Let~$(X,G/\calP)$ be a relatively free $G$-CW-pair, where~$X$ is contractible and has cocompact $n$-skeleton.
		Then~$(G,\calP)$ is of type~$\sfFP_n$ by Corollary~\ref{cor:combination}. 
		Moreover, a relative finite presentation for~$(G,\calP)$ can be obtained as follows.
		A ``blow-up" construction~\cite{Haefliger} produces from~$(X,G/\calP)$ a free $G$-CW-pair $(EG,\coprod_{P\in \calP}G\times_P EP)$ with only finitely many $G$-orbits of cells of dimension~$\le n$ in $EG\smallsetminus \coprod_{P\in \calP} G\times_P EP$.
		By dividing out the $G$-action, we obtain a CW-pair $(BG,\coprod_{P\in \calP} BP)$ with only finitely many cells of dimension~$\le n$ in~$BG\smallsetminus \coprod_{P\in \calP} BP$.
		Then the $2$-skeleton of~$BG$ yields a finite relative presentation for~$(G,\calP)$.
		
		Suppose that~$(G,\calP)$ is relatively finitely presented and of type~$\sfFP_n$.
		Let~$\spann{S,\calP\mid R}$ be a finite relative presentation for~$(G,\calP)$.
		It suffices to construct a relatively free $G$-CW-pair~$(X,G/\calP)$ such that~$X$ is $n$-dimensional, $(n-1)$-acyclic, simply-connected, and cocompact. 
		We proceed by induction over the skeleta.
		Let~$X^{(1)}$ be the coned-off Cayley graph~$\hat \Gamma(G,\calP,S)$ (Definition~\ref{defn:coned-off Cayley}).
		Since the relative presentation~$\spann{S,\calP\mid R}$ is finite, by Proposition~\ref{prop:coned-off Cayley_pi1} we can attach finitely many free $G$-orbits of $2$-cells to~$X^{(1)}$ to obtain a simply-connected $2$-complex~$X^{(2)}$.
		Suppose that we have constructed a relatively free $G$-CW-pair~$(X^{(n-1)},G/\calP)$, where~$X^{(n-1)}$ is $(n-1)$-dimensional, $(n-2)$-acyclic, simply-connected, and cocompact.
		We denote by~$C_*(X^{(n-1)})$ the cellular $\IZ G$-chain complex of~$X^{(n-1)}$ which is augmented over~$\IZ$.
		Since~$X^{(n-1)}$ is $(n-2)$-acyclic, the following sequence of~$\IZ G$-modules is exact
		\[
			0\to H_{n-1}(X^{(n-1)};\IZ)\to C_{n-1}(X^{(n-1)})\to \cdots\to C_0(X^{(n-1)})\to \IZ\to 0.
		\]
		Since the $G$-CW-pair~$(X^{(n-1)},G/\calP)$ is relatively free and~$X^{(n-1)}$ is cocompact, for all~$i=1,\ldots,n-1$, the $\IZ G$-module~$C_i(X^{(n-1)})$ is finitely generated free, and~$C_0(X^{(n-1)})$ is of the form~$L\oplus \IZ[G/\calP]$, where~$L$ is a finitely generated free $\IZ G$-module.
		Since the group pair~$(G,\calP)$ is of type~$\sfFP_n$, it follows from Lemma~\ref{lem:0-unfree_resolutions} that the $\IZ G$-module~$H_{n-1}(X^{(n-1)};\IZ)$ is finitely generated.
		Let~$(z_j)_j$ be a finite $\IZ G$-generating set of~$H_{n-1}(X^{(n-1)};\IZ)$.
		Since~$X^{(n-1)}$ is simply-connected and $(n-2)$-acyclic, the Hurewicz theorem yields maps $(f_j\colon S^{n-1}\to X^{(n-1)})_j$ representing the homology classes~$(z_j)_j$.
		Finally, we construct the desired space~$X=X^{(n)}$ by attaching free $G$-orbits of $n$-cells to~$X^{(n-1)}$ along the maps~$(f_j)_j$ and their $G$-translates.
	\end{proof}
\end{thm}

\section{Quasi-isometry invariance}

We prove that the homological finiteness properties and relative finite presentability of group pairs are inherited by quasi-retracts and hence invariant under strong quasi-isometry.

\subsection{Homological finiteness properties of group pairs}

The proof that homological finiteness properties of groups are inherited by quasi-retracts~\cite{Alonso94} goes roughly as follows: 
By Brown's criterion, a group is of type~$\sfFP_n$ if and only if the filtration of Rips complexes is essentially $(n-1)$-acyclic, and the latter is inherited by quasi-retracts.
We adapt these arguments to the relative setting of group pairs.

\begin{defn}
\label{defn:unicone Rips}
	Let~$(G,\calP)$ be a group pair and let~$S$ be a finite generating set of~$G$.
	We denote by~$d_S$ the word metric on~$G$.
	We consider the set~$G\sqcup G/\calP$ and call elements in~$G/\calP$ \emph{cone vertices}.
	Let~$\alpha\in \IN$.
	
	A finite subset~$U$ of~$G\sqcup G/\calP$ is a \emph{unicone subset of diameter~$\le \alpha$} if it satisfies the following conditions:
	\begin{itemize}
		\item The diameter of~$U\cap G$ in~$(G,d_S)$ is bounded by~$\alpha$;
		\item The set~$U$ contains at most one cone vertex~$A$. 
		For every~$g\in U\cap G$, the distance in~$(G,d_S)$ between~$g$ and~$A$ is bounded by~$\alpha$, where the coset~$A$ is viewed as a subset of~$G$. 
		Explicitly, for every~$g\in U\cap G$, there exists~$a\in A$ with~$d_S(g,a)\le \alpha$.
	\end{itemize}
	The \emph{unicone $\alpha$-Rips complex~$\hat R_\alpha(G,\calP,S)$} is the simplicial complex whose simplices are all the unicone subsets of~$G\sqcup G/\calP$ of diameter~$\le \alpha$.
	For~$\beta\in \IN$ with~$\beta\ge \alpha$, $\hat R_\alpha(G,\calP,S)$ is a subcomplex of~$\hat R_\beta(G,\calP,S)$ and we define
	\[
		\hat R_\infty(G,\calP,S)\coloneqq \varinjlim_{\alpha\in \IN} \hat R_\alpha(G,\calP,S).
	\]
\end{defn}

The unicone Rips complex is natural with respect to Lipschitz maps of group pairs in the following sense.

\begin{lem}
\label{lem:unicone Rips_functorial}
	Let~$(G,\calP)$ and~$(H,\calQ)$ be group pairs and let~$S$ and~$T$ be finite generating sets of~$G$ and~$H$, respectively.
	Let~$f=(f_1,f_2)\colon (G,\calP)\to (H,\calQ)$ be an $(L,C,M)$-Lipschitz map of pairs.
	Then the following hold:
	\begin{enumerate}[label=\enum]
		\item For all~$\alpha,\alpha'\in \IN$ with~$\alpha'\ge L\alpha+C+M$, there is a simplicial map 
		\[
			f_*\colon \hat R_\alpha(G,\calP,S)\to \hat R_{\alpha'}(H,\calQ,T)
		\]
		given on vertices by~$f_1\sqcup f_2\colon G\sqcup G/\calP\to H\sqcup H/\calQ$;
		\item Let~$r\colon (H,Q)\to (G,\calP)$ be an $(L,C,M)$-Lipschitz map of pairs such that $(f,r)$ is a quasi-retraction.
		For all~$\alpha,\alpha',\beta,\beta'\in \IN$ with~$\alpha'\ge L\alpha+C+M$, $\beta'\ge \alpha'$, and~$\beta\ge L\beta'+2C+M$, the following diagram commutes up to simplicial homotopy
		\[\begin{tikzcd}
			\hat R_\alpha(G,\calP,S)\ar{r}{f_*}\ar[hook]{d}
			& \hat R_{\alpha'}(H,\calQ,T)\ar[hook]{d}
			\\
			\hat R_{\beta}(G,\calP,S)
			& \hat R_{\beta'}(H,\calQ,T)\ar{l}[swap]{r_*}	
		\end{tikzcd}\]
		In particular, for every~$i\in \IN$, if the directed system of homology groups $(H_i(\hat R_{\alpha'}(H,\calQ,T);\IZ))_{\alpha'\in \IN}$ is essentially trivial, then the directed system of homology groups~$(H_i(\hat R_\alpha(G,\calP,S);\IZ))_{\alpha\in \IN}$ is essentially trivial.
	\end{enumerate}
	\begin{proof}
		We denote~$\hat G\coloneqq G\sqcup G/\calP$, $\hat H\coloneqq H\sqcup H/\calQ$, and~$\hat f\coloneqq f_1\sqcup f_2\colon \hat G\to \hat H$.
		
		(i) Let~$U$ be a unicone subset of diameter~$\le \alpha$ in~$\hat G$.
		We show that the subset~$\hat f(U)=f_1(U\cap G)\sqcup f_2(U\cap G/\calP)$ of~$\hat H$ is a unicone subset of diameter~$\le L\alpha+C+M$ in~$\hat H$.
		Indeed, since~$\diam(U\cap G,d_S)\le \alpha$ and~$f_1$ is $(L,C)$-Lipschitz, we have~$\diam(f_1(U\cap G),d_T)\le L\alpha+C$.
		If~$U$ contains a unique cone vertex~$A$, then~$f_2(A)$ is the unique cone vertex of~$\hat f(U)$.
		For every~$g\in U\cap G$, there exists~$a\in A$ with~$d_S(g,a)\le \alpha$.
		Since~$f$ is an $(L,C,M)$-Lipschitz map of pairs, there exists~$b\in f_2(A)$ such that~$d_T(f_1(a),b)\le M$.
		Together, we have
		\[
			d_T\bigl(f_1(g),b\bigr) 
			\le d_T\bigl(f_1(g),f_1(a)\bigr) + d_T\bigl(f_1(a),b\bigr)
			\le L\alpha +C +M.
		\]
		In other words, the distance in~$(H,d_T)$ between any element of~$f_1(U\cap G)$ and the subset~$f_2(A)$ is bounded by~$L\alpha +C+M$.
		
		(ii) Let~$U$ be a unicone subset of diameter~$\le \alpha$ in~$\hat G$.
		By part~(i), then~$\hat r\circ \hat f(U)$ is a unicone subset of diameter~$\le L(L\alpha+C+M)+C+M$ in~$\hat G$.
		Since~$(f,r)$ is an $(L,C,M)$-quasi-retraction of pairs, the subset~$\hat r\circ \hat f(U)$ of~$\hat G$ contains the same cone vertex (if any) as~$U$, and the union~$\hat r\circ \hat f(U)\cup U$ is a unicone subset of diameter~$\le L(L\alpha+C+M)+C+M+C$ in~$\hat G$.
		Since~$\beta$ is larger than this diameter, moving every vertex~$v$ of~$\hat R_\alpha(G,\calP,S)$ linearly to~$\hat r\circ \hat f(v)$ inside~$\hat R_\beta(G,\calP,S)$ extends to the desired simplicial homotopy $\hat R_\alpha(G,\calP,S)\times [0,1]\to \hat R_\beta(G,\calP,S)$.
	\end{proof}
\end{lem}

If~$S$ and~$S'$ are finite generating sets of~$G$, then the identity on~$G$ induces a strong quasi-isometry of pairs~$(G,\calP,S)\to (G,\calP,S')$.
Lemma~\ref{lem:unicone Rips_functorial} shows, in particular, that~$(H_i(\hat R_\alpha(G,\calP,S)))_{\alpha\in \IN}$ being essentially trivial is independent of the finite generating set~$S$ of~$G$.

\begin{rem}
	We kept Definition~\ref{defn:unicone Rips} and Lemma~\ref{lem:unicone Rips_functorial} short and self-contained, but some readers may find the following equivalent description of the unicone Rips complex more conceptual.
	Let~$(G,\calP)$ be a group pair and let~$S$ be a finite generating set of~$G$.
	By considering a coset in~$G/\calP$ as a subset of~$G$, the word metric~$d_S$ on~$G$ extends to a ``distance function"~$\hat d_S$ on~$G\sqcup G/\calP$.
	In general, $\hat d_S$ is not a metric because it is not faithful nor satisfies the triangle inequality.
	However, the usual definition of the $\alpha$-Rips complex still makes sense, as the simplicial complex whose simplices are all the finite subset of~$(G\sqcup G/\calP,\hat d_S)$ of diameter~$\le \alpha$.
	This construction was considered in~\cite{MPP}.
	Then the unicone $\alpha$-Rips complex~$\hat R_\alpha(G,\calP,S)$ is the subcomplex of the $\alpha$-Rips complex on~$(G\sqcup G/\calP,\hat d_S)$ consisting of all subsets that contain at most one element of~$G/\calP$.
	To obtain Lemma~\ref{lem:unicone Rips_functorial} in this description, one shows that a Lipschitz map of group pairs~$(G,\calP)\to (H,\calQ)$ induces a Lipschitz map~$(G\sqcup G/\calP,\hat d_S)\to (H\sqcup H/\calQ,\hat d_T)$ and then uses the naturality of the (unicone) Rips complex.
\end{rem}

The isometric $G$-action on the Cayley graph~$\Gamma(G,S)$ extends to a simplicial $G$-action on the unicone Rips complex~$\hat R_\alpha(G,\calP,S)$ for every~$\alpha\in \IN$ and hence on~$\hat R_\infty(G,\calP,S)$.
In the following, we do not distinguish between a simplicial complex and its geometric realisation.

\begin{lem}
\label{lem:unicone Rips_cocompact}
	Let~$(G,\calP)$ be a group pair and let~$S$ be a finite generating set of~$G$.
	The following hold:
	\begin{enumerate}[label=\enum]
		\item $\hat R_\infty(G,\calP,S)$ is (non-equivariantly) contractible;
		\item For every~$\alpha\in \IN$, the barycentric subdivision~$\hat R_\alpha(G,\calP,S)'$ of~$\hat R_\alpha(G,\calP,S)$ is a cocompact $G$-CW-complex.
		Moreover, every cell in~$\hat R_\alpha(G,\calP,S)'\smallsetminus G/\calP$ has finite stabiliser.
	\end{enumerate}
	\begin{proof}
		(i) We have a pushout of the form
		\[\begin{tikzcd}
			\coprod_{A\in G/\calP} R_\infty(G,S)\ar{r}{\coprod \id}\ar[hook]{d}
			& R_\infty(G,S)\ar{d}
			\\
			\coprod_{A\in G/\calP} \Cone_A(R_\infty(G,S))\ar{r}
			& \hat R_\infty(G,\calP,S)
		\end{tikzcd}\]
		where~$R_\infty(G,S)$ is the $\infty$-Rips complex on~$(G,d_S)$ and~$\Cone_A$ is the cone with cone point~$A$.
		Since~$R_\infty(G,S)$ is contractible, also~$\hat R_\infty(G,\calP,S)$ is contractible.
		
		(ii) We denote~$\hat G\coloneqq G\sqcup G/\calP$ and~$\hat R_\alpha\coloneqq \hat R_\alpha(G,\calP,S)$.
		We show that there are only finitely many $G$-orbits of unicone subsets of diameter~$\le \alpha$ in~$\hat G$.
		Indeed, since~$(G,d_S)$ is locally finite, there are only finitely many $G$-orbits of unicorn subsets of diameter~$\le \alpha$ in~$\hat G$ containing no cone vertex.
		Let~$U$ be a unicone subset of diameter~$\le \alpha$ in~$\hat G$ containing exactly one cone vertex.
		Since the collection~$\calP$ is finite, there are only finitely many $G$-orbits of such~$U$ of cardinality one.
		Assume that~$U$ is of the form~$\{A,v_1,\ldots,v_l\}$ with~$A\in G/\calP$, $v_1,\ldots,v_l\in G$, and~$l\ge 1$.
		Up to $G$-translation we may assume that~$v_1$ is the neutral element~$e\in G$.
		Then~$A$ is one of only finitely many cosets in~$G/\calP$ that have distance~$\le \alpha$ to~$e$ in~$(G,d_S)$.
		The elements~$v_2,\ldots,v_l$ lie in the ball of radius~$\alpha$ centred at~$e$ in~$(G,d_S)$ which is finite, again by local finiteness.
		It follows that the simplicial $G$-action on~$\hat R_\alpha$ is cocompact.
		The barycentric subdivision of any simplicial complex with a simplicial $G$-action is a $G$-CW-complex and cocompactness is preserved.
		
		We show that every cell in~$\hat R_\alpha'\smallsetminus G/\calP$ has finite stabiliser.
		A vertex~$\sigma$ of the barycentric subdivision~$\hat R_\alpha'$ corresponds to a simplex~$\{v_0,\ldots,v_l\}$ of~$\hat R_\alpha$.
		The stabiliser~$G_\sigma$ is the setwise stabiliser of~$\{v_0,\ldots,v_l\}$ which is a finite extension of the pointwise stabiliser~$\bigcap_{i=0}^l G_{v_i}$, where~$G_{v_i}$ is the stabiliser of~$v_i\in \hat G$.
		If~$v_i\in G$, then~$G_{v_i}$ is trivial, and if~$v_i\in G/\calP$, then~$G_{v_i}$ is a conjugate of a group in~$\calP$.
		In particular, if~$\sigma$ contains an element of~$G$, then the stabiliser~$G_\sigma$ is finite.
		An edge~$\tau$ of~$\hat R_\alpha'$ corresponds to a strict inclusion of simplices~$\{w_0,\ldots,w_k\}\subset \{v_0,\ldots,v_l\}$ of~$\hat R_\alpha$ for~$0\le k<l$.
		The stabiliser~$G_\tau$ is the subgroup of the setwise stabiliser of~$\sigma\coloneqq \{v_0,\ldots,v_l\}$ that setwise fixes~$\{w_0,\ldots,w_k\}$.
		In particular, the stabiliser~$G_\tau$ is a subgroup of~$G_\sigma$ which is finite because~$\sigma$ contains at most one cone vertex and hence contains at least one element of~$G$.
		Stabilisers of cells of dimension~$\ge 2$ are subgroups of edge stabilisers and therefore also finite.
	\end{proof}
\end{lem}

The homological finiteness properties of group pairs can be characterised in terms of unicone Rips complexes.

\begin{prop}
\label{prop:FPn_unicone Rips}
	Let~$(G,\calP)$ be a group pair and let~$n\in \IN$ with~$n\ge 2$.
	Then~$(G,\calP)$ is of type~$\sfFP_n$ if and only if for every~$i=1,\ldots,n-1$, the directed system of homology groups~$(H_i(\hat R_\alpha(G,\calP);\IZ))_{\alpha\in \IN}$ is essentially trivial.
	\begin{proof}
		This follows by applying Theorem~\ref{thm:Brown_criterion_space} to the $G$-CW-pair~$(\hat R_\infty(G,\calP)',G/\calP)$ and the filtration $(\hat R_\alpha(G,\calP)')_{\alpha\ge 1}$ of~$\hat R_\infty(G,\calP)'$.
		The conditions of Theorem~\ref{thm:Brown_criterion_space} are satisfied by Lemma~\ref{lem:unicone Rips_cocompact}.
	\end{proof}
\end{prop}

We deduce the inheritance of the homological finiteness properties of group pairs under quasi-retracts.

\begin{thm}
\label{thm:FPn_QI}
	Let~$(G,\calP)$ and~$(H,\calQ)$ be group pairs and let~$n\in \IN$ with~$n\ge 2$.
	If~$(G,\calP)$ is a quasi-retract of~$(H,\calQ)$ and~$(H,\calQ)$ is of type~$\sfFP_n$, then~$(G,\calP)$ is of type~$\sfFP_n$.
	\begin{proof}
		By Proposition~\ref{prop:FPn_unicone Rips}, since~$(H,\calQ)$ is of type~$\sfFP_n$, the directed system of groups $(H_i(\hat R_{\alpha'}(H,\calQ);\IZ))_{\alpha'\in \IN}$ is essentially trivial for every~$i=1,\ldots,n-1$.
		Since~$(G,\calP)$ is a quasi-retract of~$(H,\calQ)$, Lemma~\ref{lem:unicone Rips_functorial} yields that the directed system of groups $(H_i(\hat R_\alpha(G,\calP);\IZ))_{\alpha\in \IN}$ is essentially trivial for every~$i=1,\ldots,n-1$.
		Then the group pair~$(G,\calP)$ is of type~$\sfFP_n$, again by Proposition~\ref{prop:FPn_unicone Rips}.
	\end{proof}
\end{thm}

\begin{rem}
	Theorem~\ref{thm:FPn_QI} holds more generally for type~$\sfFP_n(R)$ over a ring~$R$ (Remark~\ref{rem:FPnR}) and homology with coefficients in~$R$ by the same proof.
\end{rem}

\subsection{Relative finite presentability}
The classical proof that finite presentability of groups is inherited by quasi-retracts goes roughly as follows:
A group is finitely presented if and only if its Cayley graph is coarsely simply-connected, and the latter is inherited by quasi-retracts.
We adapt these arguments to the relative setting of group pairs.

Let~$\Lambda$ be a simplicial graph and let~$l\in \IN$.
An \emph{(edge) path of length~$l-1$} in~$\Lambda$ is a sequence~$v_1,v_2,\ldots,v_l$ of vertices in~$\Lambda$ such that~$\{v_i,v_{i+1}\}$ is an edge in~$\Lambda$ for every~$i=1,\ldots,l-1$.
An \emph{(edge) loop of length~$l$} in~$\Lambda$ is an edge path~$v_1,v_2,\ldots,v_l$ in~$\Lambda$ such that~$\{v_l,v_1\}$ is an edge in~$\Lambda$.
In the following, we do not distinguish between an edge loop and its geometric realisation.

\begin{defn}
\label{defn:unicone loop}
	Let~$(G,\calP)$ be a group pair and let~$S$ be a finite generating set of~$G$.
	We denote by~$\hat \Gamma$ the coned-off Cayley graph~$\hat \Gamma(G,\calP,S)$ (Definition~\ref{defn:coned-off Cayley}).
	The vertices of~$\hat \Gamma$ in~$G/\calP$ are called \emph{cone vertices}.
	An edge loop in~$\hat \Gamma$ is a \emph{unicone loop} if it contains at most one cone vertex.

	For~$l\in \IN$, we denote by~$\hat \Gamma_l$ the $2$-complex with $1$-skeleton~$\hat \Gamma$ and a $2$-cell attached to each unicone loop of length~$<l$ in~$\hat \Gamma$.
	The graph~$\hat \Gamma$ is \emph{coarsely unicone simply-connected} if there exists~$l\in \IN$ such that~$\hat \Gamma_l$ is simply-connected.
\end{defn}

Being coarsely unicone simply-connected is inherited by quasi-retracts.

\begin{lem}
\label{lem:unicone loop_functorial}
	Let~$(G,\calP)$ and~$(H,\calQ)$ be group pairs and let~$S$ and~$T$ be finite generating sets of~$G$ and~$H$, respectively.
	If~$(G,\calP)$ is a quasi-retract of~$(H,\calQ)$ and the graph~$\hat \Gamma(H,\calQ,T)$ is coarsely unicone simply-connected, then the graph~$\hat \Gamma(G,\calP,S)$ is coarsely unicone simply-connected.
	\begin{proof}
		We denote~$\hat \Gamma_G\coloneqq \hat \Gamma(G,\calP,S)$ and~$\hat \Gamma_H\coloneqq \hat \Gamma(H,\calQ,T)$.
		Let~$(f,r)$ be a quasi-retraction of pairs, where~$f\colon (G,\calP)\to (H,\calQ)$ and~$r\colon (H,\calQ)\to (G,\calP)$ are Lipschitz maps of pairs.
		Lemma~\ref{lem:coned-off Cayley_functorial} yields a cellular map~$\hat f_*\colon \hat \Gamma_G\to \hat \Gamma_H$ and a constant~$\hat L\in \IR$, which depends only on the quasi-isometry constants.		
		Then, for every loop~$\gamma$ of length~$l$ in~$\hat \Gamma_G$, we get an associated loop~$\hat f_*\gamma$ of length~$\le \hat{L}\cdot l$ in~$\hat \Gamma_H$ with the same number of cone vertices as~$\gamma$.
		In particular, $\hat f_*\gamma$ is a unicone loop if and only if~$\gamma$ is a unicone loop.
		Similarly, Lemma~\ref{lem:coned-off Cayley_functorial} yields a cellular map~$\hat r_*\colon \hat \Gamma_H\to \hat \Gamma_G$ such that for every (unicone) loop~$\delta$ of length~$\le l$ in~$\hat \Gamma_H$, we get an associated (unicone) loop~$\hat r_*\delta$ of length~$\le \hat L\cdot l$ in~$\hat \Gamma_G$.
		Then~$\hat r_*(\hat f_*\gamma)$ is a loop in~$\hat \Gamma_G$ with the same (if any) cone vertices as~$\gamma$.
		Since~$(f,r)$ is a quasi-retraction, there exists~$l_1\in \IN$, depending only on the quasi-isometry constants, such that every loop~$\gamma$ in~$\hat \Gamma_G$ is homotopic to~$\hat r_*(\hat f_*\gamma)$ in~$\hat \Gamma_{G,l_1}$.
		Indeed, the loops~$\gamma$ and~$\hat r_*(\hat f_*\gamma)$ can be connected by an ``annulus" consisting of $2$-cells attached along unicone loops of uniformly bounded length.
		
		Let~$l_2\in \IN$ be such that~$\hat \Gamma_{H,l_2}$ is simply-connected.
		Set~$l_3\coloneqq \hat L\cdot l_2$.
		Then the map~$\hat r_*\colon \hat \Gamma_H\to \hat \Gamma_G$ extends to a map of 2-complexes~$\hat \Gamma_{H,l_2}\to \hat \Gamma_{G,l_3}$.
		Hence, for every loop~$\delta$ in~$\hat \Gamma_H$, by pushing forward the nullhomotopy of~$\delta$ in~$\hat \Gamma_{H,l_2}$, the loop~$\hat r_*\delta$ is nullhomotopic in~$\hat \Gamma_{G,l_3}$.
		Together, for~$l_4\coloneqq \max\{l_1,l_3\}$, every loop~$\gamma$ in~$\hat \Gamma_G$ is nullhomotopic in~$\hat \Gamma_{G,l_4}$.
	\end{proof}
\end{lem}

If~$S$ and~$S'$ are finite generating sets of~$G$, then the identity on~$G$ induces a strong quasi-isometry of pairs~$(G,\calP,S)\to (G,\calP,S')$.
Lemma~\ref{lem:unicone loop_functorial} shows, in particular, that~$\hat \Gamma(G,\calP,S)$ being coarsely unicone simply-connected is independent of the finite generating set~$S$ of~$G$.

\begin{lem}
\label{lem:unicone loop_cocompact}
	Let~$(G,\calP)$ be a group pair and let~$S$ be a finite generating set of~$G$.
	The following hold:
	\begin{enumerate}[label=\enum]
		\item For every loop~$\gamma$ in~$\hat \Gamma (G,\calP,S)$, there exists~$l\in \IN$ such that~$\gamma$ is nullhomotopic in the $2$-complex~$\hat \Gamma(G,\calP,S)_l$;
		\item For every~$l\in \IN$, there are only finitely many $G$-orbits of unicone loops of length~$l$ in~$\hat \Gamma(G,\calP,S)$.
	\end{enumerate}
	\begin{proof}
		We denote~$\hat \Gamma\coloneqq \hat \Gamma(G,\calP,S)$.

		 (i) We proceed by induction on the number of cone vertices of~$\gamma$.
		 If~$\gamma$ is a unicone loop, the claim is true.
		 Otherwise, choose a cone vertex~$A$ of~$\gamma$ and let~$v$ and~$w$ be the vertices adjacent to~$A$ along~$\gamma$.
		 Since~$v$ and~$w$ lie in~$G$, there exists a path~$\zeta$ from~$w$ to~$v$ consisting of vertices in~$G$.
		 Then the loop~$A,\zeta$ is a unicone loop of a certain length, say~$<l_1$.
		 It follows that~$\gamma$ is homotopic in~$\hat \Gamma_{l_1}$ to a loop~$\gamma'$ with one cone vertex less than~$\gamma$.
		 By induction, the loop~$\gamma'$ is nullhomotopic in~$\hat \Gamma_{l_2}$ for some~$l_2\in \IN$.
		 Hence, for~$l\coloneqq \max\{l_1,l_2\}$, the loop~$\gamma$ is nullhomotopic in~$\hat \Gamma_l$.
		 
		 (ii) Since the Cayley graph~$\Gamma(G,S)$ is locally finite, there are only finitely many $G$-orbits of loops of length~$l$ with no cone vertex.
		 Let~$\gamma$ be a loop of length~$l$ in~$\hat \Gamma$ with exactly one cone vertex~$A$.
		 Up to $G$-translation, we may assume that one of the vertices adjacent to~$A$ along~$\gamma$ is the neutral element~$e\in G$.
		 Hence~$A$ is one of the finitely many cosets~$(eP)_{P\in \calP}$.
		 The other~$l-2$ vertices of~$\gamma$ lie in the ball of radius~$l-2$ centred at~$e$ in~$\Gamma(G,S)$ which is finite, again by local finiteness.
	\end{proof}
\end{lem}

The relative finite presentability of group pairs can be characterised in terms of unicone loops.

\begin{prop}
\label{prop:fin pres_unicone loop}
	Let~$(G,\calP)$ be a group pair.
	Then~$(G,\calP)$ is relatively finitely presented if and only if the graph~$\hat \Gamma(G,\calP)$ is coarsely unicone simply-connected.
	\begin{proof}		
		Suppose that~$(G,\calP)$ admits a finite relative presentation~$\spann{S,\calP\mid R}$.
		We may assume that~$S$ is a finite (non-relative) generating set of~$G$.
		By Proposition~\ref{prop:coned-off Cayley_pi1}, we can obtain a simply-connected $2$-complex by attaching finitely many $G$-orbits of $2$-cells to~$\hat \Gamma(G,\calP,S)$.
		By Lemma~\ref{lem:unicone loop_cocompact}~(i), we may assume that these finitely many $G$-orbits of $2$-cells are attached along unicone loops in~$\hat \Gamma(G,\calP,S)$.
		Hence the graph~$\hat \Gamma(G,\calP,S)$ is coarsely unicone simply-connected.
		
		Suppose that~$\hat \Gamma(G,\calP,S)$ is coarsely unicone simply-connected for some finite generating set~$S$ of~$G$.
		Let~$l\in \IN$ be such that~$\hat \Gamma(G,\calP,S)_l$ is simply-connected.
		By Lemma~\ref{lem:unicone loop_cocompact}~(ii), there are only finitely many $G$-orbits of unicone loops of length~$<l$ in~$\hat \Gamma(G,\calP,S)$.
		Then the $2$-complex~$\hat \Gamma(G,\calP,S)_l$ is obtained from~$\hat \Gamma(G,\calP,S)$ by attaching finitely many $G$-orbits of $2$-cells (along unicone loops).
		Hence there exists a finite relative presentation~$\spann{S,\calP\mid R}$ of~$(G,\calP)$ by Proposition~\ref{prop:coned-off Cayley_pi1}.
	\end{proof}
\end{prop}

We deduce the inheritance of relative finite presentability of group pairs under quasi-retracts.

\begin{thm}
\label{thm:fin pres_QI}
	Let~$(G,\calP)$ and~$(H,\calQ)$ be group pairs.
	If~$(G,\calP)$ is a quasi-retract of~$(H,\calQ)$ and~$(H,\calQ)$ is relatively finitely presented, then~$(G,\calP)$ is relatively finitely presented.
	\begin{proof}
		By Proposition~\ref{prop:fin pres_unicone loop}, since~$(H,\calQ)$ is relatively finitely presented, the graph $\hat \Gamma(H,\calQ)$ is coarsely unicone simply-connected.
		Since~$(G,\calP)$ is a quasi-retract of~$(H,\calQ)$, Lemma~\ref{lem:unicone loop_functorial} yields that the graph~$\hat \Gamma(G,\calP)$ is coarsely unicone simply-connected.
		Then the group pair~$(G,\calP)$ is relatively finitely presented, again by Proposition~\ref{prop:fin pres_unicone loop}.
	\end{proof}
\end{thm}

\begin{cor}
\label{cor:Fn_QI}
	Let~$(G,\calP)$ and~$(H,\calQ)$ be group pairs and let~$n\in \IN$ with~$n\ge 2$.
	If~$(G,\calP)$ is a quasi-retract of~$(H,\calQ)$ and~$(H,\calQ)$ is of type~$\sfF_n$, then~$(G,\calP)$ is of type~$\sfF_n$.
	\begin{proof}
		By Theorem~\ref{thm:Eilenberg-Ganea}, for~$n\ge 2$, a group pair is of type~$\sfF_n$ if and only if it is relatively finitely presented and of type~$\sfFP_n$.
		Then the claim follows from Theorem~\ref{thm:fin pres_QI} and Theorem~\ref{thm:FPn_QI}.
	\end{proof}
\end{cor}

Corollary~\ref{cor:Fn_QI} and Theorem~\ref{thm:FPn_QI} together prove Theorem~\ref{thm:main_intro}.

\subsection{Bredon finiteness properties for families}

We relate the finiteness properties of group pairs to another well-studied notion of finiteness properties for groups relative to a family of subgroups, called Bredon finiteness properties.
For background on Bredon cohomology of groups, we refer to~\cite{Fluch}.

Let~$G$ be a group.
A \emph{family of subgroups of~$G$} is a non-empty set~$\calF$ of subgroups of~$G$ that is closed under taking subgroups and under conjugation by elements of~$G$.
Typical examples are the families consisting of the trivial subgroup, all finite subgroups, and all virtually cyclic subgroups, respectively.

Let~$\calF$ be a family of subgroups of~$G$.
A $G$-CW-complex~$Y$ is a model for the \emph{classifying space~$\EFG$} if for every subgroup~$K$ of~$G$, the fixed-point set~$Y^K$ is contractible if~$K\in \calF$, and empty otherwise.
Equivalently, $Y$ is a terminal object in the $G$-homotopy category of $G$-CW-complexes whose isotropy groups lie in~$\calF$.

The \emph{orbit category~$\OFG$} has as objects $G$-sets of the form~$G/K$ with~$K\in \calF$ and as morphisms $G$-maps.
The category of \emph{$\OFG$-modules} is the category of contravariant functors from~$\OFG$ to the category of abelian groups.
The trivial $\OFG$-module~$\underline{\IZ}$ sends every object to~$\IZ$ and every morphism to the identity on~$\IZ$.
The cellular $\OFG$-chain complex~$\underline{C}_*(Y)$ of a $G$-CW-complex~$Y$ evaluated at~$G/K$ is the cellular $\IZ$-chain complex of the fixed-point set~$Y^K$.

\begin{defn}
\label{defn:Bredon}
	Let~$G$ be a group, let~$\calF$ be a family of subgroups of~$G$, and let~$n\in \IN$.
		The group~$G$ is \emph{of type~$\calF$-$\sfF_n$} if there exists a $G$-CW-model for~$\EFG$ with cocompact $n$-skeleton.
		The group~$G$ is \emph{of type~$\calF$-$\sfFP_n$} if the trivial $\OFG$-module~$\underline{\IZ}$ is of type~$\sfFP_n$.
\end{defn}

The finiteness properties of a group are equivalent to the Bredon finiteness properties relative to the family containing only the trivial subgroup.

To a collection~$\calP$ of subgroups of~$G$, we associate the smallest family~$\calFP$ of subgroups of~$G$ containing all groups in~$\calP$.
Explicitly, the family~$\calFP$ consists of all conjugates of groups in~$\calP$ and all their subgroups.
The relationship between the cohomology of the group pair~$(G,\calP)$ and the Bredon cohomology of~$G$ relative to~$\calFP$ was investigated in~\cite{ANCMSS}.
When the collection~$\calP$ is malnormal (Definition~\ref{defn:reduced}), we show that the finiteness properties of the group pair and the Bredon finiteness properties are equivalent (Proposition~\ref{prop:Bredon_malnormal}). 

We begin with some general preparations.
For a $G$-CW-complex~$Y$, the \emph{singular $G$-subcomplex~$Y^\sing$} is the $G$-subcomplex of~$Y$ consisting of all cells with non-trivial stabiliser.
A $G$-homotopy equivalence between $G$-CW-complexes restricts to a $G$-homotopy equivalence between singular $G$-subcomplexes.
Algebraically, a free $\OFG$-module~$\underline{M}$ is of the form
\[
	\underline{M}=\bigoplus_{j\in J}\IZ[\Map_G(-,G/K_j)]
\]
for some set~$J$ and subgroups~$K_j\in \calF$.
The \emph{singular $\OFG$-submodule~$\underline{M}^\sing$} is the $\OFG$-submodule of~$\underline{M}$ given by
\[
	\underline{M}^\sing\coloneqq \bigoplus_{j\in J,K_j\neq 1} \IZ[\Map_G(-,G/K_j)].
\]
For a free $\OFG$-chain complex~$\underline{M}_*$, the \emph{singular $\OFG$-subcomplex~$\underline{M}_*^\sing$} is defined degree-wise by singular $\OFG$-submodules.
An $\OFG$-chain homotopy equivalence between free $\OFG$-chain complexes restricts to an $\OFG$-chain homotopy equivalence between singular $\OFG$-subcomplexes.

\begin{prop}
\label{prop:Bredon_malnormal}
	Let~$(G,\calP)$ be a group pair, where the collection~$\calP$ is malnormal and consists of distinct subgroups.
	Let~$\calFP$ denote the smallest family of subgroups of~$G$ containing~$\calP$.
	Let~$n\in \IN$ with~$n\ge 1$.
	Then the following hold:
	\begin{enumerate}[label=\enum]
		\item $(G,\calP)$ is of type~$\sfF_n$ if and only if~$G$ is of type~$\calFP$-$\sfF_n$;
		\item $(G,\calP)$ is of type~$\sfFP_n$ if and only if~$G$ is of type~$\calFP$-$\sfFP_n$.
	\end{enumerate}
	\begin{proof}
		Since the collection~$\calP$ is malnormal and consists of distinct subgroups, there exists a model~$Z$ for~$\EFPG$ that is a $G$-pushout of the following form
		\begin{equation}
		\label{eqn:Takasu}
		\begin{tikzcd}
			\coprod_{P\in \calP}G\times_P EP\ar{r}\ar{d}
			& EG\ar{d}
			\\
			\coprod_{P\in \calP} G/P\ar{r}
			& Z
		\end{tikzcd}
		\end{equation}
		where the upper horizontal map is the inclusion of a $G$-subcomplex and the left vertical map is induced by the constant map~$EP\to P/P$ for every~$P\in \calP$.
		Note that~$Z^\sing=G/\calP$.
		Recall that a $G$-CW-pair~$(X,A)$ is relatively free if every cell in~$X\smallsetminus A$ has trivial stabiliser.
		
		(i) Suppose that~$(G,\calP)$ is of type~$\sfF_n$ and let~$(X,G/\calP)$ be a relatively free $G$-CW-pair, where~$X$ is contractible and has cocompact $n$-skeleton.
		Since the collection~$\calP$ is malnormal and consists of distinct subgroups, the $G$-CW-complex~$X$ is a model for~$E_\calFP G$ with cocompact $n$-skeleton.
		
		Suppose that~$G$ is of type~$\calFP$-$\sfF_n$ and let~$Y$ be a $G$-CW-model for~$E_\calFP G$ with cocompact $n$-skeleton.
		The classifying $G$-map~$Y\to Z$, where~$Z$ is the model for~$\EFPG$ from~\eqref{eqn:Takasu}, is a $G$-homotopy equivalence and restricts to a $G$-homotopy equivalence $Y^\sing\to Z^\sing=G/\calP$.
		We denote by~$\hat Y$ the following $G$-pushout
		\[\begin{tikzcd}
			Y^\sing\ar{r}\ar{d}
			& Y\ar{d} \\
			G/\calP\ar{r}
			& \hat Y
		\end{tikzcd}\]
		where the upper map is the inclusion~$Y^\sing\to Y$ and the left map is the $G$-homotopy equivalence~$Y^\sing\to G/\calP$.
		Then the $G$-CW-pair~$(\hat Y,G/\calP)$ is relatively free, and~$\hat Y$ is contractible and has cocompact $n$-skeleton. 
				
		(ii) Suppose that~$(G,\calP)$ is of type~$\sfFP_n$ and let~$L_*$ be a free $\IZ G$-resolution of~$\Delta_{G/\calP}$ that is finitely generated in degrees~$\le n-1$.
		By composing the augmentation map~$L_0\to \Delta_{G/\calP}$ with the inclusion~$\Delta_{G/\calP}\to \IZ[G/\calP]$, we obtain a $\IZ G$-chain complex
		\begin{equation}
		\label{eqn:ZG_to_Bredon}
			\cdots\to L_1\to L_0\to \IZ[G/\calP]
		\end{equation}
		which is a $\IZ G$-resolution of~$\IZ$.
		We define an $\OFPG$-chain complex~$\underline{L}_*$ as follows:
		Set~$\underline{L}_*(G/1)$ to be the chain complex~\eqref{eqn:ZG_to_Bredon}.
		For~$K\in \calFP$ with~$K\neq 1$, $K\subset gPg^{-1}$ for~$g\in G$ and~$P\in \calP$, set~$\underline{L}_*(G/K)$ to be the $\IZ$-subcomplex of~$\underline{L}_*(G/1)$ that is concentrated in degree~$0$ given by the $\IZ$-summand of~$\IZ[G/\calP]$ corresponding to~$gP\in G/\calP$.
		This is well-defined, since the collection~$\calP$ is malnormal and consists of distinct subgroups.
		Then~$\underline{L}_*$ is a free $\OFPG$-resolution of the trivial $\OFPG$-module~$\underline{\IZ}$ which is finitely generated in degrees~$\le n$.
		
		Suppose that~$G$ is of type~$\calFP$-$\sfFP_n$ and let~$\underline{M}_*$ be a free $\OFPG$-resolution of the trivial~$\OFPG$-module~$\underline{\IZ}$ that is finitely generated in degrees~$\le n$.
		The cellular~$\OFPG$-chain complex~$\underline{C}_*(Z)$ of the model~$Z$ for~$\EFPG$ from~\eqref{eqn:Takasu} is a free $\OFPG$-resolution of~$\underline{\IZ}$.
		By the fundamental lemma of homological algebra, there exists an $\OFPG$-chain homotopy equivalence~$\underline{M}_*\to \underline{C}_*(Z)$ and it restricts to an $\OFPG$-chain homotopy equivalence~$\underline{M}_*^\sing\to \underline{C}_*(Z)^\sing$.
		Note that~$\underline{C}_*(Z)^\sing\cong \underline{C}_*(Z^\sing)=\underline{C}_*(G/\calP)$.
		We define the $\OFPG$-chain complex
		\[
			\hat{\underline{M}}_*\coloneqq \operatorname{coker}\bigl(\underline{M}_*^\sing\to \underline{M}_*\oplus \underline{C}_*(G/\calP)\bigr),
		\]
		where the map is the sum of the inclusion~$\underline{M}_*^\sing\to \underline{M}_*$ and the $\OFPG$-chain homotopy equivalence~$\underline{M}_*^\sing\to \underline{C}_*(G/\calP)$.
		Then~$\hat{\underline{M}}_*$ is a free $\OFPG$-resolution of~$\underline{\IZ}$ which is finitely generated in degrees~$\le n$ and satisfies~$\hat{\underline{M}}_*^\sing\cong \underline{C}_*(G/\calP)$.
		By evaluating~$\hat{\underline{M}}_*$ at~$G/1$, we obtain a $\IZ G$-resolution of~$\IZ$ of the form
		\[
			\cdots\to \hat{\underline{M}}_2(G/1)\to \hat{\underline{M}}_1(G/1)\to L\oplus \IZ[G/\calP]\to \IZ\to 0,
		\]
		where the $\IZ G$-modules~$L$ and~$\hat{\underline{M}}_i(G/1)$ for~$i=1,\ldots,n$ are finitely generated free.
		Hence the group pair~$(G,\calP)$ is of type~$\sfFP_n$ by Lemma~\ref{lem:0-unfree_resolutions}.
	\end{proof}	
\end{prop}

As an immediate consequence of Proposition~\ref{prop:Bredon_malnormal}, Theorem~\ref{thm:FPn_QI} and Corollary~\ref{cor:Fn_QI}, it follows that the Bredon finiteness properties for malnormal collections are inherited by quasi-retracts.

\begin{cor}
\label{cor:Bredon_QI}
	Let~$(G,\calP)$ and~$(H,\calQ)$ be group pairs, where~$\calP$ and~$\calQ$ are malnormal collections consisting of distinct subgroups.
	Let~$n\in \IN$ with~$n\ge 2$.
	Assume that~$(G,\calP)$ is a quasi-retract of~$(H,\calQ)$.
	Then the following hold:
	\begin{enumerate}[label=\enum]
		\item If~$H$ is of type~$\calFQ$-$\sfFP_n$, then~$G$ is of type~$\calFP$-$\sfFP_n$;
		\item If~$H$ is of type~$\calFQ$-$\sfF_n$, then~$G$ is of type~$\calFP$-$\sfF_n$.
	\end{enumerate}
\end{cor}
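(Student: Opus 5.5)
The corollary should follow by chaining Proposition~\ref{prop:Bredon_malnormal} with the main inheritance results for group pairs. That proposition lets us pass back and forth between Bredon finiteness for $\calFP$ (resp.\ $\calFQ$) and finiteness of the pair $(G,\calP)$ (resp.\ $(H,\calQ)$). Its hypotheses are exactly the standing assumptions: $\calP$ and $\calQ$ are malnormal and consist of distinct subgroups, and $n\ge 2\ge 1$. No new geometric input should be needed.

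For part~(i), I proceed in three steps.
\begin{enumerate}[label=\enum]
	\item Assume $H$ is of type $\calFQ$-$\sfFP_n$. Applying Proposition~\ref{prop:Bredon_malnormal}~(ii) to $(H,\calQ)$ shows that the pair $(H,\calQ)$ is of type $\sfFP_n$.
	\item Since $(G,\calP)$ is a quasi-retract of $(H,\calQ)$ and $n\ge 2$, Theorem~\ref{thm:FPn_QI} gives that $(G,\calP)$ is of type $\sfFP_n$.
	\item Applying Proposition~\ref{prop:Bredon_malnormal}~(ii) to $(G,\calP)$, in the converse direction, shows that $G$ is of type $\calFP$-$\sfFP_n$.
\end{enumerate}

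Part~(ii) is identical in structure. Use Proposition~\ref{prop:Bredon_malnormal}~(i) to pass from $\calFQ$-$\sfF_n$ to type $\sfF_n$ of $(H,\calQ)$. Then Corollary~\ref{cor:Fn_QI} transfers type $\sfF_n$ to $(G,\calP)$. Finally, Proposition~\ref{prop:Bredon_malnormal}~(i) returns to $\calFP$-$\sfF_n$.

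There is no genuine obstacle; the only point to check is bookkeeping. The families in the statement must be exactly the smallest families generated by the collections, so that the proposition applies verbatim. The distinctness and malnormality hypotheses must hold on both sides, since the proposition is invoked once for $(H,\calQ)$ and once for $(G,\calP)$.
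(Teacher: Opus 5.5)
Your proposal is correct and matches the paper's argument: the corollary is obtained by converting Bredon finiteness into finiteness of the pair via Proposition~\ref{prop:Bredon_malnormal}, transferring along the quasi-retraction with Theorem~\ref{thm:FPn_QI} (resp.\ Corollary~\ref{cor:Fn_QI}), and converting back with the same proposition. Your check that malnormality and distinctness are needed on both sides, since the proposition is used for $(H,\calQ)$ and for $(G,\calP)$, is the right bookkeeping.
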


\bibliographystyle{alpha}
\bibliography{bib}

\setlength{\parindent}{0cm}

\end{document}